\newtheorem{thm}{Theorem}[section]
\newtheorem{lem}{Lemma}[section]
\newtheorem{re}{Remark}[section]
\newtheorem{p}{Proposition}[section]
\newcommand{\keywords}[1]{\textbf{\textit{Keywords--}}#1}
\newcommand{\MSC}[1]{\textbf{\textit{Mathematics Subject Classification--}}#1}
\begin{document}
	
	\title{On estimation of weighted cumulative residual Tsallis entropy for complete and censored samples}
	\author{Siddhartha Chakraborty\thanks{\href{mailto:siddharthatdkr@gmail.com}{siddharthatdkr@gmail.com}} 
    \and Asok K. Nanda \thanks{\href{mailto:asok.k.nanda@gmail.com}{asok.k.nanda@gmail.com}}\\ Department of Mathematics and Statistics\\ Indian Institute of Science Education and Research Kolkata\\ West Bengal, India}
	\date{}
	\maketitle
	\begin{abstract}
		Recently, weighted cumulative residual Tsallis entropy has been introduced in the literature as a generalization of weighted cumulative residual entropy. We study some new properties of weighted cumulative residual Tsallis entropy measure. Next, we propose some non-parametric estimators of this measure. Asymptotic properties of these estimators are discussed. Performance of these estimators are compared by mean squared error. Non-parametric estimators for weighted cumulative residual entropy measure are also discussed. Estimator for weighted cumulative residual Tsallis entropy for progressive type-II censored data is proposed and its performance is investigated by Monte-Carlo simulations for various censoring schemes. Two uniformity tests for complete samples are proposed based on an estimator of these two measures and power of the tests are compared with some popular tests. The tests perform reasonably well. Uniformity test under progressively type-II censored data is also developed. Some real datasets are analyzed for illustration.
		
	\end{abstract}
	\keywords{asymptotic normality, consistency, order statistics, progressive type-II censoring, uniformity test}\\
	
	\noindent\MSC{62B10, 62G05, 94A17}
	
\section{Introduction}\label{S1}
Entropy is by far regarded as the most important concept in information theory which was first mathematically formulated by Shannon (1948) independently of Wiener (1948).  For a discrete random variable (rv) $X$ that takes values \{$x_1,x_2,\cdots,x_n$\} with probabilities $\mathbf{P}$ = \{$p_1,p_2,\cdots,p_n$\}, Shannon entropy is defined as 
\begin{equation}\label{e1}
H(\mathbf{P})=-\sum_{i=1}^{n}p_i\log p_i.
\end{equation}
For a continuous rv $X$, it is defined as
\begin{equation}\label{e2}
H(X)=-\int_{A} f(x)\log f(x)dx=E\left[-\log f(X)\right] ,
\end{equation}
where log is the natural logarithm, $0\log0=0$ for computational convenience, $f$ is the probability density function (pdf) of $X$, $A$ is the support of $X$ and $E$ is the expectation operator. It is also known as the differential entropy. It is considered as a measure of uncertainty and represents the average information contained in the underlying rv. Shannon's work on entropy gave rise to a new branch of applied probability with useful applications in a variety of fields such as economics, thermodynamics, statistical mechanics, mathematical biology, signal processing, statistics and reliability. An enormous amount of research has been carried out over the years by many scholars on generalizations of Shannon entropy and their applications. One of the important generalizations is due to Tsallis (1988), which is known as generalized Tsallis entropy and is given by 
\begin{equation}\label{e3}
T_{\alpha}(X)=\frac{1}{\alpha-1}\left( 1-\int_{0}^{+\infty}f^{\alpha}(x)dx\right),\;0<\alpha\neq1. 
\end{equation}
The parameter $\alpha$ is called the generalization parameter and when $\alpha\to 1$, $T_{\alpha}(X)\to H(X)$.\\ 

Entropy defined in Eq. (\ref{e1}) is always positive but differential entropy in Eq. (\ref{e2}) may be negative. Differential entropy has some drawbacks such as it cannot be defined for distributions that do not have densities. For discrete rvs $X$ and $Y$, conditional entropy of $X$ given $Y$ is zero if and only if (iff) $X$ is a function of $Y$. However, differential conditional entropy of $X$ given $Y$ is zero does not necessarily imply that $X$ is a function of $Y$. Rao et al. (2004) extended differential entropy by replacing the densities with the survival function of the rv. This measure is called cumulative residual entropy (CRE) and for a non-negative continuous rv $X$, it is defined as
\begin{equation}\label{e4}
\xi(X)=-\int_{0}^{+\infty}\bar{F}(x)\log \bar{F}(x)dx,
\end{equation}   
\noindent where $\bar{F}$ is the survival function of $X$. The CRE overcomes the above mentioned challenges that differential entropy faces and it possesses fundamental properties that Shannon entropy has. Like Shannon entropy, CRE is always non-negative and it increases by adding independent components (observations) and decreases by conditioning. Apart from that, CRE also possesses some useful properties such as it can be defined for both discrete and continuous rvs and can be easily estimated from data using empirical distribution function. Also CRE$(X|Y)=0$ iff $X$ is a function of $Y$ where CRE$(X|Y)$ is the conditional CRE of $X$ given $Y$.\\

 Ever since its introduction, various generalizations of CRE, analogous to the generalizations of Shannon entropy, have been considered by many authors. Sati and Gupta (2015) proposed one such generalization of the Tsallis entropy which is known as cumulative residual Tsallis entropy (CRTE) of order $\alpha$ and is defined as 
 \begin{equation}\label{e5}
 T_{\alpha}(X)=\frac{1}{\alpha-1}\left(1-\int_{0}^{+\infty}\bar{F}^{\alpha}(x) \right)dx,\;0<\alpha\neq 1. 
 \end{equation}
Rajesh and Sunoj (2019) proposed an alternative version of CRTE as 
\begin{equation}\label{e6}
\xi_{\alpha}(X)=\frac{1}{\alpha-1}\int_{0}^{+\infty}\left(\bar{F}(x)-\bar{F}^{\alpha}(x) \right)dx,\;0<\alpha\neq 1. 
\end{equation}
Both $T_{\alpha}(X)$ and $\xi_{\alpha}(X)$ reduce to CRE as $\alpha \to 1$. When $\alpha =2$, $\xi_{\alpha}(X)$ becomes Gini's mean difference (GMD). This alternative measure is more flexible than $T_{\alpha}(X)$ because it has more relationships with other measures related to reliability and information theory, see Rajesh and Sunoj (2019) and Toomaj and Atabay (2022).\\

 Entropy, CRE and their related measures consider only the quantitative (i.e. probabilistic) information but in many applied fields it is often required to consider qualitative characteristics or the utility of the random events as well. For example, in a two person game, it is necessary to take into account the various random strategies (quantitative) of the players involved as well as the gain or loss (qualitative) corresponding to the chosen strategy. Noticing the importance of qualitative characteristics of the random events in various applied fields, Belis and Guiasu (1968) introduced the concept of weighted entropy by assigning non-negative weights to each event based on their utility. For the continuous case, weighted differential entropy can be defined as 
\begin{equation}\label{e7}
H^w(X)=-\int_{0}^{+\infty}xf(x)\log f(x)dx,
\end{equation}
where the factor $x$ is the linear weight function that gives more importance to the larger values of the rv $X$. Motivated from the work of weighted entropy measure, numerous weighted  measures are introduced in the literature. Misagh et al. (2011) introduced weighted cumulative residual entropy (WCRE) measure which is defined as 
\begin{equation}\label{e8}
\xi^w(X)=-\int_{0}^{+\infty}x\bar{F}(x)\log \bar{F}(x)dx.
\end{equation} 
Chakraborty and Pradhan (2023) proposed weighted cumulative residual Tsallis entropy (WCRTE) as 

\begin{equation}\label{e9}
\xi_{\alpha}^w(X)=\frac{1}{\alpha-1}\int_{0}^{+\infty}x\left(\bar{F}(x)-\bar{F}^{\alpha}(x) \right)dx,\;0<\alpha\neq 1. 
\end{equation}
As $\alpha\to1$, it reduces to WCRE. Various properties of WCRTE measure along with its dynamic version and its potential as a risk measure for heavy tailed risk analysis was discussed in Chakraborty and Pradhan (2023). WCRTE is further studied by many authors, see, for example, Kattumannil et al. (2022, 2024), Zuo and Yin (2025) and the references therein.\\

Estimations of entropy and related measures were considered in literature by many authors; see, for example, Vasicek (1976), Ebrahimi et al. (1994), Correa (1995), Noughabi (2010), Al-Labadi et al. (2025) and the references therein. Recently, research on weighted information measures gained quite a lot of attention see, for example, Balakrishnan et al. (2022), Chakraborty (2023), Hashempour et al. (2023), Khammar and Jahanshahi (2018) and the references therin. However, estimations of weighted measures have not been studied to the same extent. The main focus of this article is to study various non-parametric estimators of WCRTE measure. Chakraborty and Pradhan (2023) introduced one non-parametric estimator of WCRTE measure. Here we propose four estimators of WCRTE, study their asymptotic properties and compare their performances. Also, we briefly discuss non-parametric estimations of WCRE measure. We introduce a non-parametric estimator for WCRTE under progressive type-II censored data. We develop two uniformity tests based on an estimator of WCRTE and WCRE measures for complete sample. Also, we develop an uniformity test for progressive type-II censored data as well. The rest of the paper is organised as follows:\\

We study some important properties of WCRTE measure in Section \ref{S2}. Non-parametric estimators of WCRTE and WCRE are introduced and their asymptotic properties are investigated in Section \ref{S3}. A simulation study is performed to compare the performance of the estimators in Section \ref{S4}. A non-parametric estimator for WCRTE under progressively type-II censored data is discussed in Section \ref{S5}. Uniformity tests are discussed in Section \ref{S6}. Finally, some concluding remarks are made in Section \ref{S7}.

\section{Properties of WCRTE}\label{S2} In this section, we study some properties of WCRTE measure. The following theorem discusses the existence of WCRTE measure.

\begin{thm}\label{th1}
For a non-negative continuous rv $X$, $\xi_{\alpha}^w(X)$ exists if variance of $X$ exists when $\alpha>1$.	
\end{thm}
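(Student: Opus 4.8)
\emph{Proof idea.} The plan is to reduce the existence of $\xi_\alpha^w(X)$ to the finiteness of a single non-negative integral and then dominate that integral by a multiple of $E[X^2]$.

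First I would record the sign structure. When $\alpha>1$ we have $\tfrac{1}{\alpha-1}>0$, and since $0\le\bar F(x)\le 1$ for all $x\ge 0$ it follows that $\bar F^\alpha(x)\le\bar F(x)$; hence the integrand $x\bigl(\bar F(x)-\bar F^\alpha(x)\bigr)$ is non-negative on $[0,\infty)$. Thus $\xi_\alpha^w(X)$ is always a well-defined element of $[0,+\infty]$, and proving that it ``exists'' amounts exactly to proving that this integral is finite.

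Next I would use the crude pointwise bound $x\bigl(\bar F(x)-\bar F^\alpha(x)\bigr)\le x\,\bar F(x)$ together with the identity
\begin{equation*}
\int_0^\infty x\,\bar F(x)\,dx=\int_0^\infty x\!\left(\int_x^\infty f(t)\,dt\right)dx=\int_0^\infty f(t)\!\left(\int_0^t x\,dx\right)dt=\frac{1}{2}\,E\!\left[X^2\right],
\end{equation*}
where the interchange of the order of integration is justified by Tonelli's theorem, the integrand being non-negative. If $\mathrm{Var}(X)$ exists then $E[X]$ is finite and $E[X^2]=\mathrm{Var}(X)+\bigl(E[X]\bigr)^2<\infty$, so the displayed integral is finite; therefore
\begin{equation*}
0\le\xi_\alpha^w(X)=\frac{1}{\alpha-1}\int_0^\infty x\bigl(\bar F(x)-\bar F^\alpha(x)\bigr)\,dx\le\frac{1}{2(\alpha-1)}\,E\!\left[X^2\right]<\infty,
\end{equation*}
which gives the claim.

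There is no serious obstacle here; the only steps requiring a word of care are the non-negativity of the integrand (which makes the defining integral unambiguous) and the application of Tonelli's theorem. I would also add the remark that, for $\alpha>1$, the moment condition is essentially sharp: as $x\to\infty$ the term $\bar F^\alpha(x)$ is negligible relative to $\bar F(x)$, so $\int_0^\infty x\bigl(\bar F(x)-\bar F^\alpha(x)\bigr)\,dx$ and $\int_0^\infty x\,\bar F(x)\,dx$ converge or diverge together, and the latter is finite precisely when $E[X^2]<\infty$.
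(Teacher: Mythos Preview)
Your argument is correct and is actually more direct than the paper's. The paper first rewrites
\[
\xi_\alpha^w(X)=\frac{1}{\alpha-1}\Bigl[\tfrac{1}{2}E(X^2)-\int_0^\infty x\,\bar F^\alpha(x)\,dx\Bigr],
\]
and then separately shows that the second integral is finite by splitting at $x=1$ and invoking Markov's inequality $\bar F(x)\le E(X^p)/x^p$ on $[1,\infty)$, which forces the auxiliary condition $p>2/\alpha$ before specializing to $p=2$. You instead keep the integrand intact, observe that it is non-negative for $\alpha>1$, and dominate it pointwise by $x\,\bar F(x)$, whose integral is exactly $\tfrac{1}{2}E(X^2)$ by Tonelli. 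This avoids the decomposition into a difference (and the attendant need to rule out an $\infty-\infty$ situation), avoids Markov's inequality, and yields the clean quantitative bound $0\le\xi_\alpha^w(X)\le \tfrac{1}{2(\alpha-1)}E(X^2)$. Your closing sharpness remark, based on $\bar F(x)-\bar F^\alpha(x)\sim \bar F(x)$ as $x\to\infty$, is also correct and is not present in the paper.
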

\begin{proof}
	From Eq. (\ref{e9}), we have $$\xi_{\alpha}^w(X)=\frac{1}{\alpha-1}\left[\frac{E(X^2)}{2}-\int_{0}^{+\infty}x\bar{F}^{\alpha}(x)dx \right].$$
	Now,
	\begin{eqnarray}
	\int_{0}^{+\infty}x\bar{F}^{\alpha}(x)dx&=&\int_{0}^{1}x\bar{F}^{\alpha}(x)dx+\int_{1}^{+\infty}x\bar{F}^{\alpha}(x)dx\nonumber\\
	&\leq&\int_{0}^{1}xdx+\int_{1}^{+\infty}x\left[\frac{E(X^p)}{x^p}\right]^{\alpha}dx\nonumber\\
	&=& \frac{1}{2}+\left[E(X^p) \right]^{\alpha}\int_{1}^{+\infty}x^{1-p\alpha}dx.\nonumber\\
	&<&\infty,\nonumber  
	\end{eqnarray}
	
if $p>2/\alpha$. Therefore, it is easy to see that, for $\alpha>1$, $\xi_{\alpha}^w(X)$ exists when $E(X^2)$ exists i.e. the variance exists.
Hence the proof.
\end{proof}
We calculate WCRTE for some popular distributions and present them in Table \ref{tab1}.

\noindent Next we provide a lower bound for WCRTE measure in terms of entropy.

\begin{thm}
	For a non-negative continuous rv $X$, the following inequality holds:
	$$\xi_{\alpha}^w(X)\geq\exp\left[H(X)+E(\log X)+\eta(\alpha)\right],$$ where $\eta(\alpha)=\int_{0}^{1}\log\frac{u-u^{\alpha}}{\alpha-1}du.$
\end{thm}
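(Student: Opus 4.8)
The plan is to view $\xi_{\alpha}^w(X)$ as an expectation with respect to the density $f$ of $X$ and then to apply Jensen's inequality to the concave function $\log$. First I would write
$$\xi_{\alpha}^w(X)=\int_{0}^{+\infty} f(x)\,\frac{x\bigl(\bar{F}(x)-\bar{F}^{\alpha}(x)\bigr)}{(\alpha-1)f(x)}\,dx ,$$
which is a genuine expectation because $f$ integrates to $1$ on $[0,+\infty)$, and the integrand $\frac{x(\bar{F}(x)-\bar{F}^{\alpha}(x))}{\alpha-1}$ is non-negative: for $u\in(0,1)$ one has $u-u^{\alpha}>0$ when $\alpha>1$ and $u-u^{\alpha}<0$ when $0<\alpha<1$, so the ratio $\frac{u-u^{\alpha}}{\alpha-1}$ is positive in both regimes. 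Applying $\log E[\,\cdot\,]\ge E[\log(\cdot)]$ to the random variable $\frac{X(\bar{F}(X)-\bar{F}^{\alpha}(X))}{(\alpha-1)f(X)}$ gives
$$\log\xi_{\alpha}^w(X)\ \ge\ \int_{0}^{+\infty} f(x)\log\!\left(\frac{x\bigl(\bar{F}(x)-\bar{F}^{\alpha}(x)\bigr)}{(\alpha-1)f(x)}\right)dx .$$

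Next I would split the logarithm into three pieces. The term $\int_{0}^{+\infty} f(x)\log x\,dx$ is $E(\log X)$; the term $-\int_{0}^{+\infty} f(x)\log f(x)\,dx$ is $H(X)$; and the remaining term is $\int_{0}^{+\infty} f(x)\log\frac{\bar{F}(x)-\bar{F}^{\alpha}(x)}{\alpha-1}\,dx$. In this last integral I substitute $u=\bar{F}(x)$, so that $f(x)\,dx=-du$ and the limits run from $u=1$ at $x=0$ (using $\bar{F}(0)=1$ for a non-negative continuous rv) to $u=0$ as $x\to+\infty$; the integral becomes $\int_{0}^{1}\log\frac{u-u^{\alpha}}{\alpha-1}\,du=\eta(\alpha)$. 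Assembling the three pieces yields $\log\xi_{\alpha}^w(X)\ge H(X)+E(\log X)+\eta(\alpha)$, and exponentiating gives the claimed inequality.

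The points that require a little care — rather than a real obstacle — are the regularity conditions. One should note that $\eta(\alpha)$ is finite: near $u=1$ one has $\frac{u-u^{\alpha}}{\alpha-1}\sim 1-u$ and $\int_{0}^{1}\log(1-u)\,du$ converges, while near $u=0$ the argument behaves like a positive power of $u$, whose logarithm is integrable. For the application of Jensen's inequality, if the right-hand integral is $-\infty$ (for instance when $H(X)$ or $E(\log X)$ equals $-\infty$) the asserted bound is trivial since $\xi_{\alpha}^w(X)\ge 0$; otherwise the standard Jensen argument for the probability measure $f(x)\,dx$ applies verbatim. Finally, if $\xi_{\alpha}^w(X)$ does not exist the statement is vacuous, so one may assume throughout that it is finite (e.g.\ under the condition of Theorem \ref{th1}).
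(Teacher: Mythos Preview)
Your proof is correct and is essentially the same as the paper's: the paper invokes the log-sum inequality $\int f\log\frac{f}{g}\,dx\ge -\log\int g\,dx$ with $g(x)=\frac{x(\bar F(x)-\bar F^{\alpha}(x))}{\alpha-1}$, which is exactly your Jensen step $\log E[g(X)/f(X)]\ge E[\log(g(X)/f(X))]$ written in Kullback--Leibler form, and then splits the integrand into the same three pieces and uses the same substitution $u=\bar F(x)$ to identify $\eta(\alpha)$. Your added remarks on the sign of $\frac{u-u^{\alpha}}{\alpha-1}$, the finiteness of $\eta(\alpha)$, and the trivial cases are a welcome supplement that the paper omits.
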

\begin{proof}
	Using log-sum inequality we can write
	\begin{eqnarray}\label{e10}
	\int_{0}^{+\infty}f(x)\log\frac{f(x)}{\frac{1}{\alpha-1}x(\bar{F}(x)-\bar{F}^{\alpha}(x)}dx\geq\int_{0}^{+\infty}f(x)dx\;\log\frac{\int_{0}^{+\infty}f(x)dx}{\xi_{\alpha}^w(X)}=-\log\xi_{\alpha}^w(X).
	\end{eqnarray}
Now,
\begin{eqnarray}\label{e11}
\int_{0}^{+\infty}f(x)\log\frac{f(x)}{\frac{1}{\alpha-1}x(\bar{F}(x)-\bar{F}^{\alpha}(x)}dx &=& -H(X)-\int_{0}^{+\infty}f(x)\log\left[ \frac{1}{\alpha-1}x(\bar{F}(x)-\bar{F}^{\alpha}(x)\right]dx\nonumber\\
&=& -H(X)-\int_{0}^{+\infty}f(x)\log x \;dx\nonumber\\
&&\;\;\;\;-\int_{0}^{+\infty}f(x)\log\frac{\bar{F}(x)-\bar{F}^{\alpha}(x)}{\alpha-1}dx\nonumber\\
&=&-H(X)-E(\log X)-\int_{0}^{1}\log\frac{u-u^{\alpha}}{\alpha-1}du.
\end{eqnarray}
\noindent Combining Eq. (\ref{e10}) and (\ref{e11}) and after some simplifications, we get the result.
\end{proof}

\begin{table}[h!]
	\centering
	\caption{WCRTE for some distributions.}\label{tab1}
	\begin{tabular}{p{3cm} p{5.5cm} p{2cm} p{5.5cm}}
		\hline
		Distributions &$F(x)$ & Notation & $\xi_{\alpha}^w(X)$\\[1ex]
		\hline
		Uniform & $\frac{x}{\theta};\; 0<x<\theta$ & U(0,$\theta$) & $\frac{\theta^2(\alpha+4)}{6(\alpha+1)(\alpha+2)}$\\[1ex]
		Exponential & $1-e^{-\lambda x};\; x>0,\; \lambda >0 $ & Exp($\lambda$) & $\frac{\alpha+1}{\alpha\lambda ^2}$\\[1ex]
		Rayleigh & $1-e^{-\frac{x^2}{2\sigma^2}};\; x \geq 0,\; \sigma >0 $ & RA($\sigma$) & $\frac{\sigma^2}{\alpha}$ \\[2ex]
		Pareto I & $1-\left(\frac{k}{x}\right)^{\delta};\;x>k,\;\delta>0$ & PA($\delta,k$) & $\frac{\delta k^2}{(\delta-2)(\delta\alpha-2)}$, $\delta>(<)2,\alpha>(<)1$\\[2ex] 
		Weibull & $1-e^{-(\lambda x)^{p}},\;x>0,\;\lambda,\;p>0$ & WE($p,\lambda$) & $\frac{\Gamma\left( \frac{2}{p}\right)\left(1-\frac{1}{\alpha^{2/p}}\right)}{p\lambda^2(\alpha-1)}$\\ [2ex]
		\hline
	\end{tabular}
\end{table}
 Now we provide an alternative expression for WCRTE which will be later used to develop a non-parametric estimator.

\begin{thm}\label{th4}
	For a non-negative continuous rv $X$, the following relation holds:
	$$\xi_{\alpha}^w(X)=\frac{1}{2(\alpha-1)}\int_{0}^{+\infty}x^2\left(1-\alpha\bar{F}^{\alpha-1}(x)\right) dF(x).$$
\end{thm}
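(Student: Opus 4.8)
The plan is to start from the second representation of WCRTE already derived in the proof of Theorem~\ref{th1}, namely
$$\xi_{\alpha}^w(X)=\frac{1}{\alpha-1}\left[\frac{E(X^2)}{2}-\int_{0}^{+\infty}x\bar{F}^{\alpha}(x)\,dx\right],$$
and then rewrite the surviving integral $\int_{0}^{+\infty}x\bar{F}^{\alpha}(x)\,dx$ by integration by parts so that it is expressed against $dF$. Writing $u=\bar{F}^{\alpha}(x)$ and $dv=x\,dx$, we have $du=-\alpha\bar{F}^{\alpha-1}(x)f(x)\,dx$ and $v=x^2/2$, whence
$$\int_{0}^{+\infty}x\bar{F}^{\alpha}(x)\,dx=\left[\frac{x^2}{2}\bar{F}^{\alpha}(x)\right]_{0}^{+\infty}+\frac{\alpha}{2}\int_{0}^{+\infty}x^2\bar{F}^{\alpha-1}(x)\,dF(x).$$

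Next I would dispose of the boundary term. At $x=0$ it is clearly zero, and at $x\to+\infty$ it vanishes because finite variance forces $x^2\bar{F}(x)\to 0$ (a standard consequence of $x^2\bar F(x)\le\int_x^\infty t^2\,dF(t)\to 0$), and for $\alpha>1$ we have $\bar F^{\alpha}(x)\le\bar F(x)$, so $x^2\bar F^{\alpha}(x)\to 0$ as well; in the range $0<\alpha<1$ one uses instead that $\xi_\alpha^w(X)$ is assumed to exist, which already requires the relevant integrals to converge. Substituting back and using $E(X^2)=\int_0^{+\infty}x^2\,dF(x)$ gives
$$\xi_{\alpha}^w(X)=\frac{1}{\alpha-1}\left[\frac{1}{2}\int_{0}^{+\infty}x^2\,dF(x)-\frac{\alpha}{2}\int_{0}^{+\infty}x^2\bar{F}^{\alpha-1}(x)\,dF(x)\right],$$
which is exactly $\frac{1}{2(\alpha-1)}\int_{0}^{+\infty}x^2\bigl(1-\alpha\bar{F}^{\alpha-1}(x)\bigr)\,dF(x)$, as claimed.

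The only genuinely delicate point is the justification of the vanishing boundary term in the integration by parts; everything else is routine algebraic manipulation. I would therefore state the tail condition $\lim_{x\to\infty}x^2\bar F^{\alpha}(x)=0$ explicitly and note that it is guaranteed under the hypotheses ensuring existence of $\xi_\alpha^w(X)$ (Theorem~\ref{th1} for $\alpha>1$). With that in hand the identity follows immediately.
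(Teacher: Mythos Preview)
Your argument is correct. The paper's proof reaches the same identity by a closely related but slightly different device: instead of integrating by parts, it writes $x^2/2=\int_0^x v\,dv$, substitutes this into the right-hand side, and then applies Fubini's theorem to swap the order of integration, arriving directly at $\frac{1}{\alpha-1}\int_0^\infty v(\bar F(v)-\bar F^\alpha(v))\,dv$. The two approaches are essentially dual---your integration by parts is the ``one-variable'' face of the same interchange---but they differ in where the technical burden lies: you must justify the vanishing of the boundary term $x^2\bar F^\alpha(x)$ at infinity (which you do carefully), whereas the paper's Fubini step would in principle require checking absolute integrability (which the paper leaves implicit). Your version is arguably more self-contained in that it makes the convergence issue explicit, while the paper's version is a line shorter and avoids boundary terms altogether.
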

\begin{proof}
\begin{eqnarray}
\frac{1}{2(\alpha-1)}\int_{0}^{+\infty}x^2\left(1-\alpha\bar{F}^{\alpha-1}(x)\right) dF(x)
&=&\frac{1}{\alpha-1}\int_{0}^{+\infty}\left(\int_{0}^{x}vdv\right) \left(1-\alpha\bar{F}^{\alpha-1}(x)\right)dF(x)\nonumber\\
&=&\frac{1}{\alpha-1}\int_{0}^{+\infty}\left[\int_{v}^{+\infty}\left(1-\alpha\bar{F}^{\alpha-1}(x)\right)dF(x)\right]vdv\nonumber\\
&=&\frac{1}{\alpha-1}\int_{0}^{+\infty}v(\bar{F}(v)-\bar{F}^{\alpha}(v))dv.\nonumber
\end{eqnarray}	
Hence the result follows.
\end{proof}

\section{Non-parametric Estimation}\label{S3} In this section, we develop some non-parametric estimators for WCRTE and WCRE measures and study their asymptotic properties. Let $X_1,X_2,...,X_n$ be a random sample of size $n$ drawn from a distribution with cdf $F$ and let $X_{(1)}\leq X_{(2)}\leq...\leq X_{(n)}$ be the corresponding order statistics. Further let $F_n$ be the empirical distribution function of $X$. Then 
$$F_n(x)=\frac{\text{number of obs.} \leq x}{n}=
\begin{cases}
0,\;\;\;\;\mbox{if}\;x<X_{(1)},\\
\frac{i}{n},\;\;\;\mbox{if}\;X_{(i)}\leq x<X_{(i+1)},\;i=1,2,\cdots n-1,\\
1,\;\;\;\mbox{if}\;x \geq X_{(n)}.
\end{cases}$$

\subsection{Estimation of WCRTE}

Using empirical distribution function, Chakraborty and Pradhan (2023) proposed an estimator of WCRTE as 

\begin{eqnarray}\label{e12}
\hat{\xi}_{\alpha}^w(X)&=&\frac{1}{(\alpha-1)}\int_{0}^{+\infty}x\left(\bar{F}_n(x)-\bar{F}^{\alpha}_n(x) \right)dx\nonumber\\ 
&=&\frac{1}{2(\alpha-1)}\sum_{i=1}^{n-1}\left( X^2_{(i+1)}-X^2_{(i)}\right) \left[ \left( 1-\frac{i}{n}\right) -\left( 1-\frac{i}{n}\right)^{\alpha}\right].
\end{eqnarray}

Now we propose two new estimators for WCRTE measure. These estimators are motivated from the works of Vasicek (1976) and Ebrahimi et al. (1994). Note that, using the integral transformation $F(x)=u$, we can express WCRTE in Eq. (\ref{e9}) as 

\begin{eqnarray}\label{e13}
\xi_{\alpha}^w(X)&=&\frac{1}{\alpha-1}\int_{0}^{1}F^{-1}(u)[1-u-(1-u)^{\alpha}]\;\frac{d}{du}\left(F^{-1}(u)\right)du\nonumber\\
&=&\frac{1}{2(\alpha-1)}\int_{0}^{1}[1-u-(1-u)^{\alpha}]\;\frac{d}{du}\left(F^{-1}(u)\right)^2du. 
\end{eqnarray}

Vasicek (1976) estimated $\frac{d}{du}\left(F^{-1}(u) \right)$ from the slope of the line joining the points $(F(X_{(i+m)}),X_{(i+m)})$ and $(F(X_{(i-m)}),X_{(i-m)})$. The window size $m$ is an integer and $m<\frac{n}{2}$. So we have

\begin{eqnarray}
	\frac{d}{du}\left(F^{-1}(u) \right)&\approx&\frac{X_{(i+m)}-X_{(i-m)}}{F(X_{(i+m)})-F(X_{(i-m)})}\nonumber\\
	&=&\frac{X_{(i+m)}-X_{(i-m)}}{\frac{i+m}{n}-\frac{i-m}{n}}\nonumber\\
	&=&\frac{n}{2m}(X_{(i+m)}-X_{(i-m)}).\nonumber
\end{eqnarray}
Note that, $X_{(i)}=X_{(1)}$ if $i<1$ and $X_{(i)}=X_{(n)}$ if $i>n$. 
Now we can estimate $\frac{d}{du}\left(F^{-1}(u)\right)^2$ as 

\begin{eqnarray}
\frac{d}{du}\left(F^{-1}(u)\right)^2&=& 2F^{-1}(u)\frac{d}{du}\left(F^{-1}(u)\right)\nonumber\\
&\approx& \frac{n}{2m}(X_{(i+m)}+X_{(i-m)})(X_{(i+m)}-X_{(i-m)})\nonumber\\
&=& \frac{n}{2m}\left( X^2_{(i+m)}-X^2_{(i-m)}\right) .\nonumber
\end{eqnarray}
Therefore, a Vasicek-type estimator for WCRTE is defined as
 
\begin{eqnarray}\label{e14}
\xi_{\alpha}^wV&=&\frac{1}{2(\alpha-1)}\frac{1}{n}\sum_{i=1}^{n}\left[1-\frac{i}{n}-\left( 1-\frac{i}{n}\right)^{\alpha}\right]\frac{n}{2m}\left( X^2_{(i+m)}-X^2_{(i-m)}\right) \nonumber\\
&=&\frac{1}{4m(\alpha-1)}\sum_{i=1}^{n}\left( X^2_{(i+m)}-X^2_{(i-m)}\right) \left[1-\frac{i}{n}-\left( 1-\frac{i}{n}\right)^{\alpha}\right].
\end{eqnarray}
Here $V$ in $\xi^w_{\alpha}(V)$ stands to emphasize that this estimator is a Vasicek-type estimator.\\

\noindent Ebrahimi et al. (1994) modified Vasicek entropy estimator with the help of a weight function in order to estimate the slope more accurately at the extreme points. The weight function of Ebrahimi is

$$C_i=
\begin{cases}
1+\frac{i-1}{m},\;\;\;\;\mbox{if}\;\;\;1\leq i\leq m,\\
2,\;\;\;\;\;\;\;\;\;\;\;\;\;\mbox{if}\;\;\;m+1\leq i \leq n-m,\\
1+\frac{n-i}{m},\;\;\;\;\mbox{if}\;\;\;n-m+1\leq i \leq n.
\end{cases}$$

Using these weights, another estimator of WCRTE can be defined as 

\begin{eqnarray}\label{e15}
\xi_{\alpha}^wE&=&\frac{1}{2(\alpha-1)}\frac{1}{n}\sum_{i=1}^{n}\left[1-\frac{i}{n}-\left( 1-\frac{i}{n}\right)^{\alpha}\right]\frac{n}{C_im}\left( X^2_{(i+m)}-X^2_{(i-m)}\right) \nonumber\\
&=&\frac{1}{2m(\alpha-1)}\sum_{i=1}^{n}\frac{X^2_{(i+m)}-X^2_{(i-m)}}{C_i} \left[1-\frac{i}{n}-\left( 1-\frac{i}{n}\right)^{\alpha}\right].
\end{eqnarray}
Here $E$ stands for Ebrahimi as $V$ stands for Vasicek in  (\ref{e14}).\\

\noindent We introduce another estimator of WCRTE measure by modifying Ebrahimi weights in $\xi_{\alpha}^wE$. The modifications happen in the estimation of $\frac{d}{du}\left(F^{-1}(u)\right)^2$ as follows:

\begin{eqnarray}
\frac{d}{du}\left(F^{-1}(u)\right)^2&=& 2F^{-1}(u)\frac{d}{du}\left(F^{-1}(u)\right)\nonumber\\
&\approx&2 \frac{(X_{(i+m)}+X_{(i-m)})}{C_i} \frac{n}{C_im}(X_{(i+m)}-X_{(i-m)})\nonumber\\
&=&\frac{2n}{m}\left( \frac{X^2_{(i+m)}-X^2_{(i-m)}}{C^2_i}\right).\nonumber
\end{eqnarray}
Here the modifications are made in the estimations of $\frac{d}{du}\left(F^{-1}(u)\right)^2$ as well as $F^{-1}(u)$. The new estimator is given by

\begin{eqnarray}\label{e15A}
\xi_{\alpha}^wN&=&\frac{1}{2(\alpha-1)}\frac{1}{n}\sum_{i=1}^{n}\left[1-\frac{i}{n}-\left( 1-\frac{i}{n}\right)^{\alpha}\right]\frac{2n}{m}\left( \frac{X^2_{(i+m)}-X^2_{(i-m)}}{C^2_i}\right) \nonumber\\
&=&\frac{1}{m(\alpha-1)}\sum_{i=1}^{n}\frac{X^2_{(i+m)}-X^2_{(i-m)}}{C^2_i} \left[1-\frac{i}{n}-\left( 1-\frac{i}{n}\right)^{\alpha}\right].
\end{eqnarray}

The following theorem shows the consistency of these estimators. Proof is ovbious from the works of Vasicek (1976), and hence omitted.

\begin{thm}
  Let $X_1,X_2,\cdots,X_n$ be a random sample from a continuous distribution having cdf $F$, pdf $f$ and finite second moment. Then, for $\alpha>1$,
  
  \begin{enumerate}
  	\item $\xi_{\alpha}^wV\stackrel{p}{\to}\xi_{\alpha}^w(X)$, 
  	\item $\xi_{\alpha}^wE\stackrel{p}{\to}\xi_{\alpha}^w(X)$,
  	\item $\xi_{\alpha}^wN\stackrel{p}{\to}\xi_{\alpha}^w(X)$,
  \end{enumerate}
  as $m,n\to+\infty$, such that $\frac{m}{n}\to0$. The notation ``$\stackrel{p}{\to}$" indicates the convergence in probability.
\end{thm}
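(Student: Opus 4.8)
The plan is to treat each of the three estimators as a Riemann-type sum for the integral representation of $\xi_{\alpha}^w(X)$ in Eq.~(\ref{e13}), and to show that both the replacement of the true quantile function by the sample one and the finite-difference approximation of the derivative produce errors that vanish in probability under the regime $m,n\to+\infty$, $m/n\to0$. This is the standard route for Vasicek-type statistics, and I would organise it as follows.

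Set $g(u)=\frac{1}{2(\alpha-1)}\bigl[1-u-(1-u)^{\alpha}\bigr]$ and $D(u)=\frac{d}{du}\bigl(F^{-1}(u)\bigr)^2$, so that by Eq.~(\ref{e13}) we have $\xi_{\alpha}^w(X)=\int_0^1 g(u)D(u)\,du$, which is finite because the second moment is finite and $\alpha>1$ (Theorem~\ref{th1}). Writing $D_i=\frac{n}{2m}\bigl(X^2_{(i+m)}-X^2_{(i-m)}\bigr)$, we have $\xi_{\alpha}^wV=\frac{1}{n}\sum_{i=1}^n g(i/n)\,D_i$, and I would split
\begin{align*}
\xi_{\alpha}^wV-\xi_{\alpha}^w(X)&= A_n + B_n, \qquad\text{where}\\
A_n&:=\frac{1}{n}\sum_{i=1}^n g(i/n)\bigl(D_i-D(i/n)\bigr),\\
B_n&:=\frac{1}{n}\sum_{i=1}^n g(i/n)\,D(i/n)-\int_0^1 g(u)D(u)\,du.
\end{align*}
The term $B_n$ is purely deterministic: it is the error of the regular Riemann sum for the integrable integrand $gD$, and it tends to $0$. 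A useful structural fact here, valid precisely because $\alpha>1$, is that $g(u)=\frac{1}{2(\alpha-1)}(1-u)\bigl(1-(1-u)^{\alpha-1}\bigr)=O(1-u)$ as $u\to1$, so the weight damps the possibly unbounded quantiles near the right endpoint; the left endpoint causes no trouble since $X\geq0$.

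For $A_n$ I would pass to the uniform order statistics through $X_{(j)}=F^{-1}(U_{(j)})$, and use that, when $m\to+\infty$ and $m/n\to0$, one has $U_{(i\pm m)}=\frac{i\pm m}{n}+O_p(\sqrt{m}/n)$ uniformly over $i$ in the bulk, so that $X_{(i\pm m)}\approx F^{-1}\bigl((i\pm m)/n\bigr)$ and, using the continuity and a.e.\ differentiability of $F^{-1}$,
$$D_i=\frac{n}{2m}\bigl(X_{(i+m)}+X_{(i-m)}\bigr)\bigl(X_{(i+m)}-X_{(i-m)}\bigr)\approx 2F^{-1}(i/n)\,(F^{-1})'(i/n)=D(i/n).$$
Summed against the weights $g(i/n)$ and divided by $n$, these pointwise approximations give $A_n\stackrel{p}{\to}0$; the boundary blocks $i\leq m$ and $i\geq n-m$, of combined length $2m=o(n)$, are handled separately using the decay of $g$ near the endpoints together with the finite second moment to bound the contribution of the extreme order statistics. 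This establishes part~1.

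Parts~2 and~3 then follow from part~1 once one notices that the Ebrahimi weights satisfy $C_i=2$ (hence $C_i^2=4$) for every $m+1\leq i\leq n-m$, so that the summands defining $\xi_{\alpha}^wE$ and $\xi_{\alpha}^wN$ coincide termwise with those of $\xi_{\alpha}^wV$ except on the two boundary blocks of total length $2m$; on those blocks $C_i\in[1,2]$, so $\xi_{\alpha}^wE-\xi_{\alpha}^wV$ and $\xi_{\alpha}^wN-\xi_{\alpha}^wV$ are each dominated by a fixed multiple of a sum of the type already shown to be $o_p(1)$ in the treatment of the boundary of $A_n$. I expect the genuine obstacle to be the averaged convergence $A_n\stackrel{p}{\to}0$: making precise how uniformly the finite difference $\frac{n}{2m}\bigl(X^2_{(i+m)}-X^2_{(i-m)}\bigr)$ approximates $D(i/n)$ requires the joint limit $m,n\to+\infty$ with $m/n\to0$ together with control of the smoothness and tail behaviour of $F^{-1}$ — this is exactly the delicate point in Vasicek's (1976) original argument, and the finite-second-moment hypothesis is what makes it go through here.
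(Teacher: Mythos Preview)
The paper does not actually give a proof: it states that the result ``is obvious from the works of Vasicek (1976), and hence omitted.'' Your outline is precisely a sketch of the Vasicek-type consistency argument adapted to the weighted integrand of Eq.~(\ref{e13}), so your approach coincides with what the paper implicitly invokes, and you supply considerably more detail than the paper itself.
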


Next, we define an estimator for WCRTE, by making use of Theorem \ref{th4}, as

\begin{eqnarray}\label{e16}
\xi_{\alpha}^wL&=&\frac{1}{2(\alpha-1)}\int_{0}^{+\infty}x^2\left(1-\alpha\bar{F}_n^{\alpha-1}(x) \right)dF_n(x)\nonumber\\
&=& \frac{1}{2(\alpha-1)}\frac{1}{n}\sum_{i=1}^{n}X^2_{(i)}\left[1-\alpha\left(1-\frac{i}{n} \right)^{\alpha-1} \right].
\end{eqnarray}
This estimator belongs to the class of estimators called linear combination of functions of order statistics.

The following proposition discusses the effect of scale transformation on the estimators for WCRTE measure.
\begin{p}\label{p1}
	Let $X_1,X_2,\cdots,X_n$ be a random sample with WCRTE $\xi_{\alpha}^w(X)$ and let $Y_i=\theta X_i,$ $\theta>0,\;i=1(1)n$. Suppose the estimators for WCRTE of $X$ and $Y$ are $\xi_{\alpha}^wV(X),\;\xi_{\alpha}^wE(X),\;\xi_{\alpha}^wN(X),\;\xi_{\alpha}^wL(X)$ and $\xi_{\alpha}^wV(Y),\;\xi_{\alpha}^wE(Y),\;\xi_{\alpha}^wN(Y),\;\xi_{\alpha}^wL(Y)$, respectively. Then 
	
	\begin{enumerate}
		\item $\xi_{\alpha}^wV(Y)=\theta^2\xi_{\alpha}^wV(X)$;
		\item $\xi_{\alpha}^wE(Y)=\theta^2\xi_{\alpha}^wE(X)$;
		\item $\xi_{\alpha}^wN(Y)=\theta^2\xi_{\alpha}^wN(X)$;
		\item $\xi_{\alpha}^wL(Y)=\theta^2\xi_{\alpha}^wL(X)$.
	\end{enumerate}
\end{p}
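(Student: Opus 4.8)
The plan is to exploit the common structural feature shared by all four estimators: each of $\xi_{\alpha}^wV$, $\xi_{\alpha}^wE$, $\xi_{\alpha}^wN$ and $\xi_{\alpha}^wL$ is a finite linear combination of the squared order statistics $X_{(i)}^2$ (equivalently, of the differences $X^2_{(i+m)}-X^2_{(i-m)}$), in which every coefficient depends only on $i$, $n$, $m$ and $\alpha$ and not on the observed values. Since these coefficient arrays are unaffected by the transformation $X_i\mapsto \theta X_i$, the estimator simply inherits whatever factor the squared order statistics pick up.

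First I would record the elementary fact that, for $\theta>0$, the map $x\mapsto\theta x$ is strictly increasing, so the order statistics of $Y_1,\dots,Y_n$ are exactly $Y_{(i)}=\theta X_{(i)}$ for $i=1,\dots,n$, and the boundary convention ($Y_{(i)}=Y_{(1)}$ for $i<1$ and $Y_{(i)}=Y_{(n)}$ for $i>n$) is respected because it is inherited from that of the $X_{(i)}$. Hence $Y_{(i)}^2=\theta^2 X_{(i)}^2$ and $Y^2_{(i+m)}-Y^2_{(i-m)}=\theta^2\bigl(X^2_{(i+m)}-X^2_{(i-m)}\bigr)$ for every relevant index $i$.

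Next I would substitute these identities into the defining formulas (\ref{e14}), (\ref{e15}), (\ref{e15A}) and (\ref{e16}). In each case the leading constant ($\frac{1}{4m(\alpha-1)}$, $\frac{1}{2m(\alpha-1)}$, $\frac{1}{m(\alpha-1)}$, or $\frac{1}{2n(\alpha-1)}$, respectively), the bracketed factor $1-\frac{i}{n}-\bigl(1-\frac{i}{n}\bigr)^{\alpha}$ (resp.\ $1-\alpha\bigl(1-\frac{i}{n}\bigr)^{\alpha-1}$ in (\ref{e16})), and the Ebrahimi weights $C_i$ all remain unchanged, as they involve only $i$, $n$, $m$, $\alpha$. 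Only the factors $X_{(\cdot)}^2$ are affected, each contributing precisely one factor $\theta^2$, which can be pulled out of the sum. This yields $\xi_{\alpha}^wV(Y)=\theta^2\xi_{\alpha}^wV(X)$ and, by the identical term-by-term argument, the corresponding identities for $E$, $N$ and $L$.

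I do not expect a genuine obstacle here; the only points needing a little care are the bookkeeping at the extreme indices, where the convention $X_{(i)}=X_{(1)}$ or $X_{(i)}=X_{(n)}$ is in force, and the observation that a single computation, performed termwise, settles all four estimators at once because they share the same ``weight $\times$ squared order statistic'' form.
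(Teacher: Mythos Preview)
Your proposal is correct and follows essentially the same approach as the paper: the paper also substitutes $Y_{(i)}=\theta X_{(i)}$ directly into the definition (\ref{e14}), factors out $\theta^2$, and then remarks that the remaining cases are handled similarly. Your write-up is in fact more careful, since you explicitly justify the order-statistic identity via strict monotonicity and note that the boundary convention is preserved.
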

\begin{proof}
	From Eq. (\ref{e14}), we have
	\begin{eqnarray}
	\xi_{\alpha}^wV(Y)&=&\frac{1}{4m(\alpha-1)}\sum_{i=1}^{n}\left(Y^2_{(i+m)}-Y^2_{(i-m)}\right) \left[1-\frac{i}{n}-\left( 1-\frac{i}{n}\right)^{\alpha}\right]\nonumber\\
	&=&\frac{1}{4m(\alpha-1)}\sum_{i=1}^{n}\left(\theta^2X_{(i+m)}-\theta^2X_{(i-m)}\right) \left[1-\frac{i}{n}-\left( 1-\frac{i}{n}\right)^{\alpha}\right]\nonumber\\
	&=& \theta^2\xi_{\alpha}^wV(X).\nonumber
	\end{eqnarray}
	Similarly, we can prove the other results.
\end{proof}
\subsubsection{Asymptotic normality of $\xi^w_{\alpha}L$}
Here we study the asymptotic normality of $\xi_{\alpha}L$. Consider
\begin{eqnarray}\label{e17}
T_n=2(\alpha-1)\xi_{\alpha}^wL&=&\frac{1}{n}\sum_{i=1}^{n}X^2_{(i)}\left[1-\alpha\left(1-\frac{i}{n+1} \right)^{\alpha-1}\right]\nonumber\\
&=&\frac{1}{n}\sum_{i=1}^{n}\psi\left(\frac{i}{n+1}\right)h(X_{(i)}), 
\end{eqnarray}
where $\psi(u)=1-\alpha(1-u)^{\alpha-1}$ and $h(u)=u^2$. The following theorem discusses the asymptotic normality of $T_n$ and hence of $\xi_{\alpha}^wL$. The notation ``$\stackrel{d}{\to}$" indicates convergence in distribution.

\begin{thm}\label{th3.2}
	For a non-negative, absolutely continuous rv $X$, $\sqrt{n}\left(\xi_{\alpha}^wL-\xi_{\alpha}^w(X) \right)$ is asymptotically normally distributed with zero mean and variance $$\sigma^2=\frac{2}{(\alpha-1)^2}\int_{0}^{+\infty}\int_{0}^{+\infty}xyF(x)\bar{F}(y)\left[1-\alpha\bar{F}^{\alpha-1}(x) \right] \left[1-\alpha\bar{F}^{\alpha-1}(y) \right]dxdy.$$
\end{thm}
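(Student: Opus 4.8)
The plan is to treat $\xi_{\alpha}^wL$ as a linear function of order statistics with a smooth score function and to invoke the classical central limit theory for such statistics. Put $\psi(u)=1-\alpha(1-u)^{\alpha-1}$ and $h(x)=x^{2}$, as in Eq. (\ref{e17}), so that $\xi_{\alpha}^wL=\frac{1}{2(\alpha-1)}T_{n}$. Since replacing $\frac{i}{n+1}$ by $\frac{i}{n}$ changes $T_{n}$ only by $o_{p}(n^{-1/2})$, $T_{n}$ agrees to that order with the von Mises functional
\[
\widetilde{T}_{n}=\int_{0}^{+\infty}h(x)\,\psi(F_{n}(x))\,dF_{n}(x)=\theta(F_{n}),\qquad \theta(G)=\int_{0}^{+\infty}h\,\psi(G)\,dG .
\]
By Theorem \ref{th4}, $\theta(F)=\int_{0}^{+\infty}x^{2}(1-\alpha\bar F^{\alpha-1}(x))\,dF(x)=2(\alpha-1)\xi_{\alpha}^w(X)$, so it is enough to prove $\sqrt{n}\,(T_{n}-\theta(F))\xrightarrow{d}N(0,4(\alpha-1)^{2}\sigma^{2})$ and then to rescale by $\frac{1}{2(\alpha-1)}$ via Slutsky's theorem.

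The key step is the asymptotically linear (Chernoff--Savage / H\'ajek-projection) representation
\[
\sqrt{n}\,(T_{n}-\theta(F))=\frac{1}{\sqrt n}\sum_{j=1}^{n}\mathrm{IF}(X_{j})+o_{p}(1),
\]
obtained from a first-order expansion of $\theta(F_{n})$ about $F$: the von Mises derivative in the direction of a unit mass at $y$ is
\[
\mathrm{IF}(y)=h(y)\psi(F(y))-\theta(F)+\int_{0}^{+\infty}h(x)\psi'(F(x))\big(\mathbf{1}(y\le x)-F(x)\big)\,dF(x),
\]
and one integration by parts (legitimate because $F(0)=0$, $h(0)=0$ and $x^{2}\bar F(x)\to0$, the last holding as $E X^{2}<\infty$) collapses it to the compact form $\mathrm{IF}(y)=-\int_{0}^{+\infty}h'(x)\psi(F(x))\big(\mathbf{1}(y\le x)-F(x)\big)\,dx$. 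As $\mathrm{IF}(X_{1}),\dots,\mathrm{IF}(X_{n})$ are i.i.d.\ with mean zero and finite variance, the Lindeberg--L\'evy CLT gives asymptotic normality of the sum with variance $E[\mathrm{IF}(X_{1})^{2}]$; alternatively one may simply cite a standard theorem on $L$-statistics with smooth weights (e.g.\ Stigler (1974)) once its hypotheses are verified here. Asymptotic unbiasedness is then automatic because the linear part has exact mean zero.

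To identify the variance, use $E\big[(\mathbf{1}(X\le x)-F(x))(\mathbf{1}(X\le y)-F(y))\big]=F(x\wedge y)-F(x)F(y)$ and Fubini to get
\[
E[\mathrm{IF}(X_{1})^{2}]=\int_{0}^{+\infty}\!\!\int_{0}^{+\infty}h'(x)h'(y)\,\psi(F(x))\psi(F(y))\,\big(F(x\wedge y)-F(x)F(y)\big)\,dx\,dy .
\]
Substituting $h'(x)=2x$ and $\psi(F(x))=1-\alpha\bar F^{\alpha-1}(x)$, writing $F(x\wedge y)-F(x)F(y)=F(x\wedge y)\bar F(x\vee y)$, and exploiting the symmetry of the remaining factor in $x$ and $y$, this reduces to $4(\alpha-1)^{2}\sigma^{2}$ with $\sigma^{2}$ as in the statement; dividing by $4(\alpha-1)^{2}$ finishes the computation.

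The hard part will be the rigorous justification of the linearization, namely showing that the remainder $\theta(F_{n})-\theta(F)-\frac1n\sum_{j}\mathrm{IF}(X_{j})$ is $o_{p}(n^{-1/2})$ and that the variance double integral converges. Because $h(x)=x^{2}$ grows and $h(x)\psi(F(x))$ is unbounded (though $dF$-integrable), this forces a growth/moment restriction on $X$ relative to the tail of $F$ — effectively a finite fourth moment — and the boundary terms in the integration by parts must be checked with care. These are exactly the regularity conditions under which the classical $L$-statistic limit theorems apply, so once they are in place the remaining work reduces to the algebraic rearrangement of $\sigma^{2}$ above.
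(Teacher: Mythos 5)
Your proposal is correct and follows essentially the same route as the paper: both rest on the classical asymptotic normality theory for linear combinations of functions of order statistics (the paper cites Chernoff, Gastwirth and Johns (1967) and Shorack (1969) and quotes the standard mean and variance formulas $\mu=\int_0^1\psi(u)h(F^{-1}(u))\,du$ and $\sigma_1^2=2\int_0^1\int_u^1\psi(u)\psi(w)(h(F^{-1}(u)))'(h(F^{-1}(w)))'u(1-w)\,dw\,du$, which are exactly the symmetrized form of your $E[\mathrm{IF}(X_1)^2]=\iint h'(x)h'(y)\psi(F(x))\psi(F(y))(F(x\wedge y)-F(x)F(y))\,dx\,dy$), and both finish by the same change of variables and rescaling by $1/(2(\alpha-1))$. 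Your write-up is in fact more explicit than the paper's about the influence-function linearization and the moment/regularity conditions (effectively $EX^4<\infty$) needed for the $L$-statistic CLT to apply, conditions the paper leaves implicit.
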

\begin{proof}
	From the results given in Chernoff et al. (1967) and Shorack (1969), it can be shown that $T_n$ converges to normal distribution since $\sqrt{n}(T_n-\mu)\stackrel{d}{\to} N(0,\sigma_1^2)$ as $n\to+\infty$. Note that, $E(T_n)=\mu$ and it can be obtained as
	\begin{eqnarray}\label{e18}
	\mu&=&\int_{0}^{1}\psi(u)h(F^{-1}(u))du\nonumber\\
	&=&\int_{0}^{1}\left(1-\alpha(1-u)^{\alpha-1}\right) (F^{-1}(u))^2du\nonumber\\
	&=&2(\alpha-1)\xi_{\alpha}^w(X).
	\end{eqnarray}
The variance of $T_n$ is 

\begin{eqnarray}\label{e19}
\sigma_1^2&=&2\int_{0}^{1}\int_{u}^{1}\psi(u)\psi(w)\left(h(F^{-1}(u))\right)'\left(h(F^{-1}(w))\right)'u(1-w)dwdu\nonumber\\
&=&8\int_{0}^{1}\int_{u}^{1}\psi(u)\psi(w)u(1-w)F^{-1}(u)F^{-1}(w)dF^{-1}(w)dF^{-1}(u)\nonumber\\
&=&8\int_{0}^{+\infty}\int_{y}^{+\infty}xyF(y)\bar{F}(x)\left[1-\alpha\bar{F}^{\alpha-1}(x) \right] \left[1-\alpha\bar{F}^{\alpha-1}(y) \right]dxdy.
\end{eqnarray}	
The result follows from equations (\ref{e18}) and (\ref{e19}) and by using the property of normal distribution.
\end{proof}
Since $\sigma^2$ contains unknown parameters in $F$, one needs to work with its estimator for pracical considerations. A consistent estimator for $\sigma^2$ based on a dataset can be defined as 
\begin{eqnarray}
\hat{\sigma}^2&=&\frac{2}{(\alpha-1)^2}\sum_{j=1}^{n-1}\sum_{i=j+1}^{n-1}\frac{j}{n}\left( 1-\frac{i}{n}\right) \left[1-\alpha\left( 1-\frac{i}{n}\right)^{\alpha-1} \right] \left[1-\alpha\left( 1-\frac{j}{n}\right)^{\alpha-1} \right]\nonumber\\
&&\;\;\;\;\;\;\;\;\;\;\;\;\;\;\;\;\;\;\;\;\;\;\;\;\;\;\;\;\;\;\;\;\;\;*\left( X^2_{(i+1)}-X^2_{(i)}\right)\left( X^2_{(j+1)}-X^2_{(j)}\right).\nonumber 
\end{eqnarray}

\subsection{Estimation of WCRE} WCRE is a limiting case of WCRTE measure. Like we expresssed WCRTE in Eq. (\ref{e13}) by using integral transformation $F(x)=u$, we can similarly expressed WCRE as

\begin{eqnarray}\label{e20}
\xi^w(X)=-\frac{1}{2}\int_{0}^{1}(1-u)\log(1-u)\;\frac{d}{du}\left(F^{-1}(u)\right)^2du.
\end{eqnarray}

From Eq. (\ref{e20}), we can develop the estimators for WCRE. The Vasicek-type and the Ebrahimi-type estimators of WCRE can be obtained as

\begin{eqnarray}
\xi^wV=-\frac{1}{4m}\sum_{i=1}^{n-1}\left( X^2_{(i+m)}-X^2_{(i-m)}\right) \left(1-\frac{i}{n}\right)\log\left( 1-\frac{i}{n}\right);\nonumber
\end{eqnarray}
and
\begin{eqnarray}
\xi^wE=-\frac{1}{2m}\sum_{i=1}^{n-1}\frac{ X^2_{(i+m)}-X^2_{(i-m)}}{C_i} \left(1-\frac{i}{n}\right)\log\left( 1-\frac{i}{n}\right),\nonumber
\end{eqnarray}
respectively. We define a new estimator for WCRE as
\begin{eqnarray}
\xi^wN=-\frac{1}{m}\sum_{i=1}^{n-1}\frac{ X^2_{(i+m)}-X^2_{(i-m)}}{C^2_i} \left(1-\frac{i}{n}\right)\log\left(1-\frac{i}{n}\right).\nonumber
\end{eqnarray}
\begin{re}
 As $\alpha\to1$, $\xi_{\alpha}^w(X)\to\xi^w(X)$. This limiting condition also holds for the respective estimators as well. To be specific, $\xi_{\alpha}^wV\to\xi^wV$, $\xi_{\alpha}^wE\to\xi^wE$ and $\xi_{\alpha}^wN\to\xi^wN$ as $\alpha\to1$.
\end{re}

\begin{thm}\label{th3.3}
	For a non-negative continuous rv $X$,
	\begin{eqnarray}
	\xi^w(X)=-\frac{1}{2}\int_{0}^{+\infty}x^2\left(1+\log\bar{F}(x)\right)dF(x.\nonumber) 
	\end{eqnarray}
\end{thm}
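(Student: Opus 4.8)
The plan is to follow the template of the proof of Theorem \ref{th4}. I would begin with the right-hand side and rewrite the factor $x^2$ as $2\int_0^x v\,dv$, so that
$$-\frac{1}{2}\int_0^{+\infty}x^2\left(1+\log\bar F(x)\right)dF(x) = -\int_0^{+\infty}\left(\int_0^x v\,dv\right)\left(1+\log\bar F(x)\right)dF(x).$$
Interchanging the order of the two integrations --- justified by Tonelli's theorem together with the finiteness of $E(X^2)$ and the integrability of $x^2\log\bar F(x)$, which is exactly what the existence of $\xi^w(X)$ requires --- converts this into
$$-\int_0^{+\infty} v\left[\int_v^{+\infty}\left(1+\log\bar F(x)\right)dF(x)\right]dv.$$

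The heart of the argument is the evaluation of the inner integral. I would split it as $\int_v^{+\infty}dF(x)+\int_v^{+\infty}\log\bar F(x)\,dF(x)=\bar F(v)+\int_v^{+\infty}\log\bar F(x)\,dF(x)$, and then integrate the second piece by parts, using $-\bar F(x)$ as an antiderivative of $f(x)$ and $\frac{d}{dx}\log\bar F(x)=-f(x)/\bar F(x)$. This yields $\int_v^{+\infty}\log\bar F(x)\,dF(x)=\left[-\bar F(x)\log\bar F(x)\right]_v^{+\infty}-\bar F(v)=\bar F(v)\log\bar F(v)-\bar F(v)$, where the boundary term at $+\infty$ vanishes because $t\log t\to 0$ as $t\to 0^+$. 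Adding the two pieces, the $\bar F(v)$ terms cancel and the inner integral collapses to $\bar F(v)\log\bar F(v)$. Substituting this back, the right-hand side becomes $-\int_0^{+\infty}v\,\bar F(v)\log\bar F(v)\,dv$, which is precisely $\xi^w(X)$ as defined in Eq. (\ref{e8}), and the proof is complete.

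The only genuinely delicate points are the Fubini/Tonelli interchange and the vanishing of the boundary term at infinity; both are routine once one assumes that $\xi^w(X)$ exists (equivalently $E(X^2)<\infty$, in parallel with Theorem \ref{th1}), so I do not expect a real obstacle. An alternative, even shorter route is to let $\alpha\to 1$ in Theorem \ref{th4}: since $\xi_\alpha^w(X)\to\xi^w(X)$ and, by L'H\^opital's rule applied in the variable $\alpha$, $\frac{1-\alpha\bar F^{\alpha-1}(x)}{2(\alpha-1)}\to -\frac{1}{2}\left(1+\log\bar F(x)\right)$ pointwise, a dominated-convergence argument lets one pass the limit inside the integral and recover the stated identity.
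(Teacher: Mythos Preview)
Your proposal is correct and follows exactly the route the paper intends: the paper's own proof simply says ``proceed along the same line as in Theorem~\ref{th4},'' and your argument does precisely that, writing $x^2=2\int_0^x v\,dv$, swapping the order of integration, and reducing the inner integral to $\bar F(v)\log\bar F(v)$. In fact you supply more detail than the paper does (the integration-by-parts step, the vanishing boundary term, and the alternative $\alpha\to1$ limiting argument), so nothing further is needed.
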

\begin{proof}
     Proof follows by proceeding along the same line as in Theorem \ref{th4}.
\end{proof}
Using Theorem \ref{th3.3}, we define an estimator for WCRE as
\begin{eqnarray}
\xi^wL
&=&-\frac{1}{2n}\sum_{i=1}^{n}X^2_{(i)}\left[1+\log\left(1-\frac{i}{n}\right)\right]. 
\end{eqnarray}

In the following proposition, we discuss the effect of scale transformation on the estimators of WCRE measure. Proof is similar to that of Proposition \ref{p1}, and hence omitted.

\begin{p}\label{p2}
	Let $X_1,X_2,\cdots,X_n$ be a random sample with WCRE $\xi^w(X)$ and let $Y_i=\theta X_i,\;\theta>0,\;i=1(1)n$. Suppose the estimators for WCRE of $X$ and $Y$ are $\xi^wV(X)$, $\xi^wE(X)$, $\xi^wN(X)$, $\xi^wL(X)$ and $\xi^wV(Y),\;\xi^wE(Y),\;\xi^wN(Y),\;\xi^wL(Y)$, respectively. Then 
	
	\begin{enumerate}
		\item[(i)] $\xi^wV(Y)=\theta^2\xi^wV(X)$;
		\item[(ii)] $\xi^wE(Y)=\theta^2\xi^wE(X)$;
		\item[(iii)] $\xi^wN(Y)=\theta^2\xi^wN(X)$;
		\item[(iv)] $\xi^wL(Y)=\theta^2\xi^wL(X)$.
	\end{enumerate}
\end{p}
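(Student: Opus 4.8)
The plan is to mimic the proof of Proposition \ref{p1} and simply substitute $Y_i = \theta X_i$ into each of the four estimator formulas for WCRE. The crucial observation is that all four estimators are homogeneous of degree two in the order statistics: $\xi^wV$, $\xi^wE$ and $\xi^wN$ are linear combinations of differences of the form $X^2_{(i+m)} - X^2_{(i-m)}$, while $\xi^wL$ is a linear combination of the squares $X^2_{(i)}$. Since the coefficients in these linear combinations, namely $\left(1-\frac{i}{n}\right)\log\left(1-\frac{i}{n}\right)$ together with the constants $1/(4m)$, $1/(2m)$, $1/m$, $1/(2n)$ and the Ebrahimi weights $C_i$, depend only on the ranks $i$ and not on the observed values, they are unaffected by the scale change.

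First I would note that, because $\theta > 0$, the order statistics transform simply as $Y_{(i)} = \theta X_{(i)}$ for every $i$, so that $Y^2_{(i)} = \theta^2 X^2_{(i)}$ and consequently $Y^2_{(i+m)} - Y^2_{(i-m)} = \theta^2\left(X^2_{(i+m)} - X^2_{(i-m)}\right)$. Then for part (i) I would write out $\xi^wV(Y)$ from its defining formula, replace each $Y^2_{(i\pm m)}$ by $\theta^2 X^2_{(i\pm m)}$, factor $\theta^2$ out of the sum, and recognize the remaining expression as $\xi^wV(X)$. For parts (ii) and (iii) the argument is identical except that the summand carries an extra factor $1/C_i$ or $1/C_i^2$; since $C_i$ depends only on $i$, $m$, $n$, the factorization of $\theta^2$ goes through verbatim. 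For part (iv), $\xi^wL(Y) = -\frac{1}{2n}\sum_{i=1}^{n} Y^2_{(i)}\left[1+\log\left(1-\frac{i}{n}\right)\right] = -\frac{\theta^2}{2n}\sum_{i=1}^{n} X^2_{(i)}\left[1+\log\left(1-\frac{i}{n}\right)\right] = \theta^2 \xi^wL(X)$.

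There is essentially no obstacle here; the only point requiring a word of care is the behaviour of the order statistics under the transformation, and this is immediate since $x \mapsto \theta x$ is strictly increasing for $\theta > 0$ and hence preserves the ordering of the sample. As the statement itself indicates that the proof parallels that of Proposition \ref{p1}, I would in fact just carry out the computation for part (i) explicitly and remark that the remaining three parts follow by the same reasoning, exactly as the paper does for the WCRTE estimators. Thus a single displayed chain of equalities for $\xi^wV(Y)$, followed by the sentence ``the other results are proved analogously,'' suffices.
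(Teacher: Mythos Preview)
Your proposal is correct and follows exactly the approach the paper takes: the paper simply states that the proof is similar to that of Proposition~\ref{p1} and omits it, and your argument---substituting $Y_{(i)}=\theta X_{(i)}$, factoring out $\theta^2$ from the quadratic terms, and noting that the rank-dependent coefficients are unaffected---is precisely the computation behind that reference.
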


In the next theorem, we discuss the asymptotic normality of the estimator $\xi^wL$.
\begin{thm}
	For a non-negative continuous rv $X$, $\sqrt n \left(\xi^wL-\xi^w(X) \right)$ converges to $N(0,\sigma^2)$ as $n\to+\infty$ where $$\sigma^2=2\int_{0}^{+\infty}\int_{y}^{+\infty}xyF(y)\bar{F}(x)\left[1+\log\bar{F}(x)\right] \left[1+\log\bar{F}(y) \right]dxdy.$$
\end{thm}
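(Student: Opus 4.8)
The plan is to treat $\xi^wL$ as a linear combination of functions of order statistics and to mirror, essentially verbatim, the proof of Theorem \ref{th3.2}, since the present statement is precisely its $\alpha\to 1$ limit. First I would set
$$T_n = -2\,\xi^wL = \frac{1}{n}\sum_{i=1}^{n}\psi\!\left(\frac{i}{n+1}\right)h(X_{(i)}),\qquad \psi(u)=1+\log(1-u),\quad h(u)=u^2,$$
which is exactly the object in Eq.~(\ref{e17}) with the Tsallis score $1-\alpha(1-u)^{\alpha-1}$ replaced by its limit $1+\log(1-u)$. By the results of Chernoff et al.\ (1967) and Shorack (1969) on asymptotic normality of $L$-statistics, $\sqrt{n}\,(T_n-\mu)\stackrel{d}{\to}N(0,\sigma_1^2)$, provided the relevant regularity conditions hold.

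Next I would identify the limiting mean and variance. As in Eq.~(\ref{e18}), $\mu=\int_{0}^{1}\psi(u)\,h(F^{-1}(u))\,du=\int_{0}^{1}\bigl(1+\log(1-u)\bigr)(F^{-1}(u))^2\,du$, and the substitution $u=F(x)$ together with Theorem \ref{th3.3} gives $\mu=-2\,\xi^w(X)$. For the variance I would use the Chernoff--Savage representation, as in Eq.~(\ref{e19}),
$$\sigma_1^2 = 2\int_{0}^{1}\int_{u}^{1}\psi(u)\psi(w)\bigl(h(F^{-1}(u))\bigr)'\bigl(h(F^{-1}(w))\bigr)'\,u(1-w)\,dw\,du,$$
substitute $\bigl(h(F^{-1}(u))\bigr)'=\frac{d}{du}(F^{-1}(u))^2=2F^{-1}(u)\frac{d}{du}F^{-1}(u)$, and change variables $x=F^{-1}(w)$, $y=F^{-1}(u)$ (so that $x\ge y$) to obtain
$$\sigma_1^2 = 8\int_{0}^{+\infty}\int_{y}^{+\infty}xy\,F(y)\bar{F}(x)\bigl[1+\log\bar{F}(x)\bigr]\bigl[1+\log\bar{F}(y)\bigr]\,dx\,dy.$$
Since $\xi^wL=-\tfrac12 T_n$ and $\xi^w(X)=-\tfrac12\mu$ are a fixed linear function of $T_n$ and $\mu$, it follows that $\sqrt{n}\,(\xi^wL-\xi^w(X))=-\tfrac12\sqrt{n}\,(T_n-\mu)\stackrel{d}{\to}N(0,\sigma_1^2/4)$, and $\sigma_1^2/4$ is exactly the claimed $\sigma^2$.

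The step I expect to be the main obstacle is the verification of the Chernoff et al.\ (1967)/Shorack (1969) hypotheses, which is more delicate here than in Theorem \ref{th3.2}: the score $\psi(u)=1+\log(1-u)$ is unbounded as $u\to 1$, whereas the Tsallis score $1-\alpha(1-u)^{\alpha-1}$ stays bounded for $\alpha>1$. One therefore needs moment/tail conditions ensuring that $\int_{0}^{1}(F^{-1}(u))^2\,|\log(1-u)|\,du$ and the double integral defining $\sigma_1^2$ converge --- for example a mild growth restriction on the upper tail of $F$ beyond finiteness of the second moment. An alternative route is to pass to the limit $\alpha\to 1$ in Theorem \ref{th3.2} via the preceding Remark, using that $\frac{1-\alpha\bar{F}^{\alpha-1}(x)}{\alpha-1}\to-(1+\log\bar{F}(x))$; this is slicker but requires justifying the interchange of the limit with the weak convergence, which is itself non-trivial, so I would favour the direct $L$-statistic argument above.
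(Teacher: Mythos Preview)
Your proposal is correct and follows essentially the same approach as the paper: the paper sets $\zeta_n=-2\xi^wL$, states that ``proceeding along the same lines as in Theorem~\ref{th3.2}'' one obtains $\sqrt{n}(\zeta_n-\mu)\to N(0,\sigma_1^2)$ with $\mu=-2\xi^w(X)$ and $\sigma_1^2$ equal to $8$ times the stated double integral, and then divides by $2$. Your write-up is in fact more explicit than the paper's (you identify $\psi(u)=1+\log(1-u)$ and carry out the mean and variance computations), and your caveat about the unbounded score near $u=1$ requiring stronger tail conditions than mere finiteness of $E(X^2)$ is a legitimate technical point that the paper does not address.
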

\begin{proof}
	Consider, $\zeta_n=-2\xi^wL$. Now proceeding alomg the same lines as in Theorem \ref{th3.2}, it can be shown that $\sqrt n(\zeta_n-\mu)\to N(0,\sigma_1^2)$ as $n\to+\infty$, where the mean is $\mu=-2\xi^w(X)$ and the variance is $$\sigma_1^2=8\int_{0}^{+\infty}\int_{y}^{+\infty}xyF(y)\bar{F}(x)\left[1+\log\bar{F}(x)\right] \left[1+\log\bar{F}(y) \right]dxdy.$$
	Hence the proof follows.
\end{proof}
A consistent estimator for $\sigma^2$ can be obtained as
\begin{eqnarray}
\hat{\sigma^2}&=&2\sum_{j=1}^{n-1}\sum_{i=j+1}^{n-1}\frac{j}{n}\left( 1-\frac{i}{n}\right) \left( 1+\log\left( 1-\frac{i}{n}\right) \right)\left( 1+\log\left( 1-\frac{j}{n}\right) \right)\nonumber\\
&&\;\;\;\;\;\;\;\;\;\;\;\;\;\;\;\;\;\;\;\;\;\;\;\;\;\;\;\;\;\;\;\;\;\;*\left( X^2_{(i+1)}-X^2_{(i)}\right)\left( X^2_{(j+1)}-X^2_{(j)}\right).\nonumber 
\end{eqnarray}

\section{Comparison of estimators}\label{S4} We conduct a simulation study to assess the performance of the proposed estimators. We compare the performance of these estimators by means of Bias and mean square error (MSE). For reference distributions, we consider exponential, Weibull and uniform distributions. We denote them by Exp($\lambda$), WE($p,\lambda$) and U($a,b$), respectively. The cdfs of these distributions are provided in Table \ref{tab1}. We generate 10000 random samples of size $n$ = 10, 20 and 30 from Exp(1), Exp(2), U(0,1) and WE(2,1) distributions and calculate bias and MSEs of these estimators. The results are provided in Tables \ref{tab3}-\ref{tab6}.\\

The parameter of WCRTE is taken as $\alpha$ = 2. We choose $\alpha$ = 2 because from Theorem \ref{th1} we observe that, for a continuous distribution, WCRTE exists if variance of that distribution exists when $\alpha>$1. When $\alpha<$1, we will need higher order moments to exist for WCRTE to exist. In practice, it is better to work with $\alpha>$1, since higher the value of $\alpha$, it is more likely for WCRTE to exist (see Theorem \ref{th1}).\\ 

The estimators $\hat{\xi}^w_{\alpha}(X)$ and $\xi_{\alpha}^wL$ do not require a window size. However, $\xi_{\alpha}^wV$, $\xi_{\alpha}^wE$ and $\xi_{\alpha}^wN$ depend on the window size $m(<\frac{n}{2})$. We consider all possible values of $m$ for bias and MSE calculation and try to provide a guideline for the choice of $m$. The choice of $m$ will depend on the underlying distribution and the sample size $n$. We highlight the minimum values of MSEs of these estimators in Tables \ref{tab3}-\ref{tab6}.\\

From Tables \ref{tab3} and \ref{tab4}, it is observed that, for Exp(1) and Exp(2) distributions, $\xi_{\alpha}^wL$ performs the best for $n$ = 10 and $\xi_{\alpha}^wN$ performs the best for $n$ = 20 and 30. All these estimators perform much better for Exp(2) distribution than Exp(1). When the sample size increases the bias and the MSE of all the estimators decrease. The optimal choice of $m$ (for different $n$) for each estimator can be obtained from the respective tables. It is not possible to provide an exact formula for the choice of $m$ but approximate formulae may be provided from Tables \ref{tab3}-\ref{tab4}, which will provide optimal or near optimal choices of $m$ for exponential distributions. Note from the tables that
\begin{itemize}
	\item[(a)] for $\xi_{\alpha}^wV$ and $\xi_{\alpha}^wE$, when $n\leq$ 20, choose $m=[\frac{n}{2}]-1$. When $n$ = 30, choose $m=[\frac{n}{3}]$;
	\item[(b)] for $\xi_{\alpha}^wN$, choose $m=[\frac{n}{4}]+1$, for all $n$.
\end{itemize}

From Table \ref{tab5} it is observed that, for U(0,1) distribution $\xi_{\alpha}^wN$ performs the best, although the performance of all five estimators is similar. Performance of $\xi_{\alpha}^wV$, $\xi_{\alpha}^wE$ and $\xi_{\alpha}^wN$ do not significantly vary for the choices of $m$ and small values of $m$ will be appropriate. Also, for U(0,1), these estimators perform better than all the other models considered. As there is no significant differnces bettwen the MSEs of these five estimators for U(0,1) model, any choice among these estimator will work. For simplicity, its best to work with $\hat{\xi}^w_{\alpha}(X)$ or $\xi_{\alpha}^wL$.\\

From Table \ref{tab6}, we observe that for WE(2,1) (Rayleigh) distribution, $\xi_{\alpha}^wN$ may be considered to be the best whereas $\xi_{\alpha}^wL$ is the worst. However the difference in performance may be considered to be insignificant. For $\xi_{\alpha}^wV$, $\xi_{\alpha}^wE$ and $\xi_{\alpha}^wN$, we prescribe to choose small $m$ for better result.


\subsection{Data Analysis}\label{D1} We analyze a real dataset, given in Nelson (2005, page 105), to study the effectiveness of the proposed estimators. The following data describe failure times (in minutes) for an insulating fluid between two electrodes subject to a voltage of 34 kV. 

\begin{table}[h!]
	\centering
	\begin{tabular}{cccccccccc}
		0.19, & 0.78, & 0.96, & 1.31, & 2.78, & 3.16, & 4.15, & 4.67, & 4.85, & 6.50, \\
		7.35, & 8.01, & 8.27, & 12.06, & 31.75, & 32.52, & 33.91, & 36.71, & 72.89. &
	\end{tabular}
\end{table}
\noindent Here sample size is 19. The dataset has come from an exponential distribution with density $$f(x)=\lambda \exp(-\lambda x);\;x,\lambda>0.$$ The maximum likelihood estimate (MLE) is $\hat{\lambda}=\frac{1}{\bar{X}}=0.0696$. We apply Kolmogorov-Smirnov test to check exponentiality of this dataset. The obtained p-value is 0.1672 which supports the exponentiality hypothesis. The theoretical value of WCRTE for exponential distribution is $\frac{\alpha+1}{\alpha\lambda^2}$.\\

We consider $\alpha$ = 2 for illustration. Then, the parametric estimate of WCRTE is $\frac{\alpha+1}{\alpha\hat{\lambda}^2}$ = 309.6512. We calculate non-parametric estimates for this dataset. For $m$-spacings estimators, we choose that $m$ which minimizes the absolute difference between the non-parametric estimates and the parametric estimate. The obtained values of the estimates are $\xi_{\alpha}^w(X)$ = 218.2337, $\xi_{\alpha}^wL$ = 232.5466, $\xi_{\alpha}^wV$ = 294.3883 for $m$ = 9, $\xi_{\alpha}^wE$ = 309.6309 for $m$ = 8 and $\xi_{\alpha}^wN$ = 304.8546 for $m$ = 5. So the estimators $\xi_{\alpha}^wE$ and $\xi_{\alpha}^wL$ perform well. Note that $m$-spacings estimators perform much better than $\xi_{\alpha}^w(X)$ and $\xi_{\alpha}^wL$.\\

For this dataset, Vasicek-type estimator performs best for maximum value of $m$, Ebrahimi-type estimator performs best for second largest choice of $m$ and $\xi_{\alpha}^wN$ performs best for $m$ = 5, which is the middle point. These values of $m$ are similar to the values found in Table \ref{tab4} for Exp(1) distribution for $n$ = 20. They are in line with the choices provided in Section \ref{S4}.

\begin{landscape}
\begin{table}[h!]
	\centering
	\caption{Bias and MSE of the estimators for Exp(1) distribution with $\alpha$ = 2, $n$ = 10, 20 and 30 and for various $m$.}\label{tab3}
	\begin{tabular}{c c c c c c c c c c c c}
		\hline
		& \multicolumn{2}{c}{$\hat{\xi}_{\alpha}^w(X)$}&\multicolumn{2}{c}{$\xi_{\alpha}^wL$} & & \multicolumn{2}{c}{$\xi_{\alpha}^wV$}&\multicolumn{2}{c}{$\xi_{\alpha}^wE$} & \multicolumn{2}{c}{$\xi_{\alpha}^wN$}\\
		\hline
		$n$ & Bias & MSE & Bias & MSE & $m$ & Bias & MSE & Bias & MSE & Bias & MSE\\
		\hline
		10 & -0.8259 & \bf0.9686 & -0.7251 & \bf0.8848 & 1 & -1.005 & 1.1386 & -1.003 & 1.1301 & -1.001 & 1.1314\\
		   & & & & & 2 & -0.9764 & 1.1045 & -0.9301 & 1.0660 & -0.8945 & 1.0177\\
		   & & & & & 3 & -0.9564 & 1.1038 & -0.8787 & 1.0321 & -0.7865 & 0.9478\\
		   & & & & & 4 & -0.9365 & \bf 1.0956 & -0.8373 & \bf 0.9963 & -0.6966 & \bf 0.9469\\[1ex]
		   
		20 & -0.7879 & \bf0.7751 & -0.7400 & \bf0.7205 & 1 & -0.9313 & 0.9486 & -0.9315 & 0.9568 & -0.9281 & 0.9495\\
		& & & & & 2 & -0.8965 & 0.9092 & -0.8801 & 0.8866 & -0.8489 & 0.8500\\
		& & & & & 3 & -0.8653 & 0.8713 & -0.8234 & 0.8213 & -0.7680 & 0.7732\\
		& & & & & 4 & -0.8379 & 0.8515 & -0.7746 & 0.7817 & -0.6852 & 0.7200\\
		& & & & & 5 & -0.8146 & 0.8290 & -0.7223 & 0.7490 & -0.6122 & \bf 0.6816\\
		& & & & & 6 & -0.7804 & 0.8134 & -0.6824 & 0.7354 & -0.5342 & 0.7161\\
		& & & & & 7 & -0.7609 & 0.8166 & -0.6364 & 0.7442 & -0.4705 & 0.7278\\
		& & & && 8 & -0.7490 & 0.8090 & -0.5964 & 0.7315 & -0.4171 & 0.6940\\
		& & & & & 9 & -0.7405 & \bf 0.8078 & -0.5893 & \bf 0.7189 & -0.3539 & 0.7440\\[1ex]
		
		30 & -0.7749 & \bf0.7143 & -0.7427 & \bf0.6693 & 1 & -0.8943 & 0.8679 & -0.8910 & 0.8591 & -0.8951 & 0.8664\\
		& & & & & 2 & -0.8701 & 0.8334 & -0.8503 & 0.8087 & -0.8287 & 0.7801\\
		& & & & & 3 & -0.8420 & 0.7972 & -0.8062 & 0.7529 & -0.7595 & 0.7047\\
		& & & & & 4 & -0.8177 & 0.7709 & -0.7566 & 0.7080 & -0.6917 & 0.6440\\
		& & & & & 5 & -0.7860 & 0.7406 & -0.7174 & 0.6706 & -0.6287 & 0.6050\\
		& & & & & 6 & -0.7522 & 0.7716 & -0.6673 & 0.6435 & -0.5634 & 0.5837\\
		& & & & & 7 & -0.7283 & 0.7019 & -0.6328 & 0.6234 & -0.4957 & \bf 0.5722\\
		& & & & & 8 & -0.7120 & 0.6864 & -0.5855 & 0.6152 & -0.4329 & 0.5796\\
		& & & & & 9 & -0.6765 & 0.6700 & -0.5413 & 0.6180 & -0.3744 & 0.5818\\
		& & & & & 10& -0.6584 & 0.6712 & -0.5212 & 0.5994 & -0.3284 & 0.6004\\
		& & & & & 11& -0.6430 & 0.6522 & -0.4890 & \bf 0.5917 & -0.2638 & 0.6726\\
		& & & & & 12& -0.6272 & \bf 0.6498 & -0.4635 & 0.6047 & -0.2185 & 0.7171\\
		& & & & & 13& -0.6116 & 0.6780 & -0.4179 & 0.6285 & -0.1679 & 0.7245\\
		& & & & & 14& -0.5919 & 0.6682 & -0.4083 & 0.6152 & -0.1098 & 0.7953\\\hline

	\end{tabular}
\end{table}

\end{landscape}

\begin{landscape}
	\begin{table}[h!]
		\centering
		\caption{Bias and MSE of the estimators for Exp(2) distribution with $\alpha$ = 2, $n$ = 10, 20 and 30 and for various $m$.}\label{tab4}
		\begin{tabular}{c c c c c c c c c c c c}
			\hline
			& \multicolumn{2}{c}{$\hat{\xi}_{\alpha}^w(X)$}&\multicolumn{2}{c}{$\xi_{\alpha}^wL$} & & \multicolumn{2}{c}{$\xi_{\alpha}^wV$}&\multicolumn{2}{c}{$\xi_{\alpha}^wE$} & \multicolumn{2}{c}{$\xi_{\alpha}^wN$}\\
			\hline
			$n$ & Bias & MSE & Bias & MSE & $m$ & Bias & MSE & Bias & MSE & Bias & MSE\\
		\hline
		 10 & -0.2076 & \bf0.0616 & -0.1814 & \bf0.0560 & 1 & -0.2504 & 0.0713 & -0.2497 & 0.0706 & -0.2506 & 0.0707\\
		& & & & & 2 & -0.2444 & 0.0697 & -0.2366 & 0.0671 & -0.2206 & 0.0630\\
		& & & & &  3 & -0.2371 & 0.0684 & -0.2186 & 0.0646 & -0.1955 & \bf0.0595\\
		& & & & &  4 & -0.2347 & \bf0.0682 & -0.2097 & \bf0.0640 & -0.1734 & 0.0596\\[1ex]
		
		20 & -0.1964 & \bf0.0491 & -0.1836 & \bf0.0451 & 1 & -0.2327 & 0.0597 & -0.2323 & 0.0593 & -0.2312 & 0.0589\\
		& & & & &  2 & -0.2238 & 0.0566 & -0.2192 & 0.0550 & -0.2103 & 0.0526\\
		& & & & &  3 & -0.2172 & 0.0553 & -0.2054 & 0.0515 & -0.1917 & 0.0486\\
		& & & & &  4 & -0.2097 & 0.0531 & -0.1935 & 0.0493 & -0.1712 & 0.0450\\
		& & & & &  5 & -0.2026 & 0.0524 & -0.1796 & 0.0473 & -0.1529 & 0.0445\\
		& & & & &  6 & -0.1962 & 0.0513 & -0.1684 & 0.0466 & -0.1348 & \bf0.0442\\
		& & & & &  7 & -0.1912 & 0.0508 & -0.1606 & \bf0.0454 & -0.1158 & 0.0454\\
		& & & & &  8 & -0.1868 & \bf0.0507 & -0.1514 & 0.0458 & -0.0985 & 0.0447\\
		& & & & &  9 & -0.1837 & 0.0509 & -0.1450 & 0.0455 & -0.0864 & 0.0480\\[1ex]
		
		30 & -0.1940 & \bf0.0443 & -0.1847 & \bf0.0416 & 1 & -0.2234 & 0.0541 & -0.2242 & 0.0544 & -0.2247 & 0.0546\\
		& & & & &  2 & -0.2176 & 0.0520 & -0.2129 & 0.0504 & -0.2078 & 0.0489\\
		& & & & &  3 & -0.2084 & 0.0494 & -0.2038 & 0.0478 & -0.1904 & 0.0443\\
		& & & & &  4 & -0.2025 & 0.0474 & -0.1904 & 0.0443 & -0.1748 & 0.0408\\
		& & & & &  5 & -0.1960 & 0.0459 & -0.1794 & 0.0422 & -0.1549 & 0.0378\\
		& & & & &  6 & -0.1883 & 0.0441 & -0.1672 & 0.0400 & -0.1408 & 0.0368\\
		& & & & &  7 & -0.1830 & 0.0433 & -0.1573 & 0.0386 & -0.1237 & \bf0.0361\\
		& & & & &  8 & -0.1753 & 0.0421 & -0.1486 & 0.0377 & -0.1078 & \bf0.0361\\
		& & & & &  9 & -0.1716 & 0.0423 & -0.1381 & \bf0.0372 & -0.0944 & 0.0382\\
		& & & & &  10& -0.1642 & 0.0414 & -0.1293 & 0.0378 & -0.0776 & 0.0394\\
		& & & & &  11& -0.1610 & 0.0412 & -0.1209 & 0.0391 & -0.0671 & 0.0401\\
		& & & & &  12& -0.1576 & \bf0.0411 & -0.1156 & 0.0389 & -0.0565 & 0.0418\\
		& & & & &  13& -0.1537 & 0.0412 & -0.1022 & 0.0394 & -0.0404 & 0.0440\\
		& & & & &  14& -0.1493 & 0.0414 & -0.1008 & 0.0393 & -0.0306 & 0.0487\\\hline  
	\end{tabular}
\end{table}
\end{landscape}

\begin{landscape}
\begin{table}[h!]
	\centering
	\caption{Bias and MSE of the estimators for U(0,1) distribution with $\alpha$ = 2, $n$ = 10, 20 and 30 and for various $m$.}\label{tab5}
	\begin{tabular}{c c c c c c c c c c c c}
		\hline
		& \multicolumn{2}{c}{$\hat{\xi}_{\alpha}^w(X)$}&\multicolumn{2}{c}{$\xi_{\alpha}^wL$} & & \multicolumn{2}{c}{$\xi_{\alpha}^wV$}&\multicolumn{2}{c}{$\xi_{\alpha}^wE$} & \multicolumn{2}{c}{$\xi_{\alpha}^wN$}\\
		\hline
		$n$ & Bias & MSE & Bias & MSE & $m$ & Bias & MSE & Bias & MSE & Bias & MSE\\
		\hline
	    10 & -0.0086 & \bf0.00032 & -0.0086 & \bf0.00043 & 1 & -0.0152 & \bf0.00050 & -0.0145 & 0.00048 & -0.0127 & 0.00043\\
		& & & & & 2 & -0.0171 & 0.00052 & -0.0137 & 0.00043 & -0.0086 & 0.00034\\
		& & & & & 3 & -0.0200 & 0.00058 & -0.0129 & 0.00039 & -0.0025 & \bf0.00028\\
		& & & & & 4 & -0.0236 & 0.00070 & -0.0119 & \bf0.00035 & 0.0047 &0.00034\\[1ex]
		
		20 & -0.0043 & \bf0.00014 & 0.0041 & \bf0.00017 & 1 & -0.0077 & \bf0.00019 & -0.0078 & 0.00020 & -0.0076 & 0.00019\\
		& & & & & 2 & -0.0085 & 0.00020 & -0.0077 & 0.00018 & -0.0065 & 0.00017\\
		& & & & & 3 & -0.0090 & 0.00020 & -0.0075 & 0.00017 & -0.0049 & 0.00014\\
		& & & & & 4 & -0.0104 & 0.00022 & -0.0073 & 0.00016 & -0.0029 & 0.00011\\
		& & & & & 5 & -0.0118 & 0.00023 & -0.0070 & 0.00015 & -0.00035 & \bf0.00010\\
		& & & & & 6 & -0.0135 & 0.00027 & -0.0070 & 0.00014 & 0.0026 & \bf0.00010\\
		& & & & & 7 & -0.0154 & 0.00031 & -0.0067 & 0.00013 & 0.0056 & 0.00014\\
		& & & & & 8 & -0.0175 & 0.00036 & -0.0064 & 0.00012 & 0.0092 & 0.00020\\
		& & & & & 9 & -0.0197 & 0.00044 & -0.0061 & \bf0.00011 & 0.0130 & 0.00029\\[1ex]
		
		30 & -0.0029 & \bf0.00009 & 0.0028 & \bf0.00010 & 1 & -0.0053 & \bf0.00011 & -0.0052 & 0.00011 & -0.0051 & 0.00011\\
		& & & & & 2 & -0.0055 & \bf0.00011 & -0.0052 & 0.00011 & -0.0046 & 0.00010\\
		& & & & & 3 & -0.0061 & 0.00012 & -0.0051 & 0.00011 & -0.0041 & 0.00009\\
		& & & & & 4 & -0.0066 & 0.00012 & -0.0051 & 0.00010 & -0.0031 & 0.00008\\
		& & & & & 5 & -0.0071 & 0.00012 & -0.0050 & 0.00009 & -0.0020 & 0.00007\\
		& & & & & 6 & -0.0081 & 0.00013 & -0.0048 & 0.00008 & -0.0007 & \bf0.00006\\
		& & & & & 7 & -0.0089 & 0.00014 & -0.0050 & 0.00008 & 0.0008 & \bf0.00006\\
		& & & & & 8 & -0.0100 & 0.00015 & -0.0048 & 0.00008 & 0.0025 & \bf0.00006\\
		& & & & & 9 & -0.0112 & 0.00017 & -0.0048 & 0.00008 & 0.0045 & 0.00008\\
		& & & & & 10& -0.0126 & 0.00020 & -0.0046 & 0.00007 & 0.0065 & 0.00010\\
		& & & & & 11& -0.0140 & 0.00023 & -0.0045 & 0.00007 & 0.0086 & 0.00013\\
		& & & & & 12& -0.0154 & 0.00027 & -0.0043 & \bf0.00006 & 0.0110 & 0.00018\\
		& & & & & 13& -0.0167 & 0.00031 & -0.0041 & \bf0.00006 & 0.0133 & 0.00024\\
		& & & & & 14& -0.0184 & 0.00036 & -0.0043 & \bf0.00006 & 0.0160 & 0.00032\\\hline  
	\end{tabular}
\end{table}
\end{landscape}

\begin{landscape}
\begin{table}[h!]
	\centering
	\caption{Bias and MSE of the estimators for WE(2,1) distribution with $\alpha$ = 2, $n$ = 10, 20 and 30 and for various $m$.}\label{tab6}
	\begin{tabular}{c c c c c c c c c c c c}
		\hline
		& \multicolumn{2}{c}{$\hat{\xi}_{\alpha}^w(X)$}&\multicolumn{2}{c}{$\xi_{\alpha}^wL$} & & \multicolumn{2}{c}{$\xi_{\alpha}^wV$}&\multicolumn{2}{c}{$\xi_{\alpha}^wE$} & \multicolumn{2}{c}{$\xi_{\alpha}^wN$}\\
		\hline
		$n$ & Bias & MSE & Bias & MSE & $m$ & Bias & MSE & Bias & MSE & Bias & MSE\\
		\hline
		10 & -0.0250 & \bf0.0078 & 0.0251 & \bf0.0102 & 1 & -0.0582 & \bf0.0081 & -0.0562 & 0.0078 & -0.0490 & 0.0072\\
		& & & & & 2 & -0.0588 & \bf0.0081 & -0.0452 & \bf0.0073 & -0.0274 & \bf0.0068\\
		& & & & & 3 & -0.0612 & 0.0086 & -0.0381 & 0.0074 & -0.0038 & 0.0080\\
		& & & & & 4 & -0.0672 & 0.0092 & -0.0305 & 0.0077 & 0.0207 & 0.0107\\[1ex]
		
		20 & -0.0120 & \bf0.0040 & 0.0116 & \bf0.0046 & 1 & -0.0356 & 0.0042 & -0.0347 & 0.0042 & -0.0327 & 0.0040\\
		& & & & & 2 & -0.0328 & 0.0042 & -0.0277 & \bf0.0039 & -0.0210 & \bf0.0038\\
		& & & & & 3 & -0.0310 & \bf0.0041 & -0.0206 & \bf0.0039 & -0.0069 & 0.0040\\
		& & & & & 4 & -0.0295 & \bf0.0041 & -0.0145 & 0.0041 & -0.0062 & 0.0047\\
		& & & & & 5 & -0.0296 & 0.0043 & -0.0079 & 0.0043 & -0.0195 & 0.0059\\
		& & & & & 6 & -0.0285 & 0.0043 & -0.0023 & 0.0047 & -0.0343 & 0.0074\\
		& & & & & 7 & -0.0300 & 0.0047 & -0.0030 & 0.0052 & -0.0490 & 0.0097\\
		& & & & & 8 & -0.0310 & 0.0049 & 0.0078 & 0.0055 & -0.0634 & 0.0126\\
		& & & & & 9 & -0.0328 & 0.0052 & 0.0120 & 0.0062 & -0.0780 & 0.0154\\[1ex]
		
		30 & -0.0081 & \bf0.0027 & 0.0074 & \bf0.0030 & 1 & -0.0268 & 0.0029 & -0.0260 & 0.0028 & -0.0251 & 0.0028\\
		& & & & & 2 & -0.0234 & \bf0.0027 & -0.0207 & \bf0.0027 & -0.0165 & \bf0.0026\\
		& & & & & 3 & -0.0235 & 0.0028 & -0.0164 & \bf0.0027 & -0.0080 & 0.0027\\
		& & & & & 4 & -0.0194 & \bf0.0027 & -0.0099 & \bf0.0027 & 0.0035 & 0.0031\\
		& & & & & 5 & -0.0174 & \bf0.0027 & -0.0044 & 0.0030 & 0.0130 & 0.0038\\
		& & & & & 6 & -0.0155 & 0.0028 & 0.0009 & 0.0031 & 0.0240 & 0.0045\\
		& & & & & 7 & -0.0139 & 0.0030 & 0.0064 & 0.0035 & 0.0345 & 0.0058\\
		& & & & & 8 & -0.0177 & 0.0032 & 0.0114 & 0.0040 & 0.0453 & 0.0073\\
		& & & & & 9 & -0.0119 & 0.0033 & 0.0148 & 0.0043 & 0.0577 & 0.0092\\
		& & & & & 10& -0.0114 & 0.0034 & 0.0210 & 0.0049 & 0.0676 & 0.0111\\
		& & & & & 11& -0.0122 & 0.0036 & 0.0248 & 0.0054 & 0.0780 & 0.0132\\
		& & & & & 12& -0.0117 & 0.0037 & 0.0288 & 0.0057 & 0.0877 & 0.0154\\
		& & & & & 13& -0.0138 & 0.0038 & 0.0335 & 0.0066 & 0.0985 & 0.0180\\
		& & & & & 14& -0.0149 & 0.0040 & 0.0359 & 0.0069 & 0.1104 & 0.0213\\\hline  
	\end{tabular}
\end{table}
\end{landscape}

\section{Estimation of WCRTE for Progressively Type-II Censored Data}\label{S5} So far, we have discussed non-parametric estimation of WCRTE measure for complete sample. In this section, we propose an estimator for WCRTE measure for progressively type-II censored data. Censoring plays a crucial role in reliability and life-testing as it is not always possible to conduct experiments till all the units fail due to time and cost constraints. Type-I, type-II and hybrid censoring are commonly used in life-testing but these censoring schemes do not allow removal of items during the experiment. Cohen (1963) introduced progressive type-II (PC-II) censoring scheme which allows removal of units during the experiment. It is a generalization of the type-II censoring. A PC-II censoring can be described as follows:\\

Suppose $n$ identical units are put to a test, each having a common lifetime distribution with pdf $f$ and cdf $F$. Also, suppose a pre-fixed number $r$ of failures is allowed. Let $R_1,\ldots, R_r$ be prefixed integers such that $R_1 +\cdots+R_r = n-r$. When the first failure occurs, $R_1$ of the remaining $n-1$ surviving units are randomly removed from the test. Then $R_2$ of the remaining $n-R_1-2$ units are randomly removed at the time of second failure. Proceeding this way, at the time of the $r$th failure, all the remaining $R_r = n-r-\sum_{i=1}^{r-1}R_i$ units are removed from the test. The failure times $X_{1:r:n},\cdots,X_{r:r:n}$ are called progressive type-II censored order statistics. PC-II censoring reduces to the conventional type-II censoring when $R_i = 0$ for $i=1, \ldots, r-1$, and $R_r=n-r$.\\

Let $F_{r:n}(x)$ denote the empirical distribution function of the PC-II censored data. Balakrishnan and Sandhu (1995) expressed $F_{r:n}(x)$ as

$$F_{r:n}(x)=
\begin{cases}
0,\;\;\;\;\mbox{if}\;x<X_{1:r:n},\\
\alpha_{i:r:n},\;\;\;\mbox{if}\;X_{i:r:n}\leq x<X_{i+1:r:n},\;i=1,2,\cdots r-1,\\
\alpha_{r:r:n},\;\;\;\mbox{if}\;x \geq X_{r:r:n},
\end{cases}$$
where $\alpha_{i:r:n}=E(U_{i:r:n})$ and $U_{i:r:n}$ is the $i$th progressive type-II censored order statistic from U(0,1) distribution. Note that $$E(U_{i:r:n})=1-\prod_{k=r-i+1}^{r}\beta_k,$$ where $\beta_i=\frac{i+\sum_{k=r-i+1}^{r}R_k}{1+i+\sum_{k=r-i+1}^{r}R_k}$, $\beta_k=\beta_1$ if $k\leq 1$ and $\beta_k=\beta_r$ if $k\geq r$. For detailed discussions on PC-II censoring, readers may refer to Balakrishnan and Aggarwala (2000) and Balakrishnan and Cramer (2014).\\

Suppose $n$ items are put to a PC-II life-test and $X_{1:r:n},\cdots,X_{r:r:n}$ are the corresponding progressive type-II order statistics. We propose an estimator for WCRTE as 

\begin{eqnarray}\label{eN}
\xi_{\alpha}^wP&=&\frac{1}{(\alpha-1)}\int_{0}^{X_{i:r:n}}x\left(\bar{F}_{r:n}(x)-\bar{F}^{\alpha}_{r:n}(x) \right)dx\nonumber\\ 
&=&\frac{1}{2(\alpha-1)}\sum_{i=1}^{r}\left( X^2_{i+1:r:n}-X^2_{i:r:n}\right) \left[ \left( 1-E(U_{i:r:n})\right) -\left( 1-E(U_{i:r:n})\right)^{\alpha}\right].
\end{eqnarray}
\begin{p}\label{P1}
	The estimator $\xi_{\alpha}^wP$ is consistent when $r\to n$, $n\to \infty$ and $\alpha>1$.
\end{p}

\begin{proof}
	When $r\to n$, the PC-II censored sample becomes complete sample and the required result follows from Lemma \ref{l2}.
\end{proof}

\begin{p}
	For type-II censoring at $r$, the estimator reduces to $$\xi_{\alpha}^wP=\frac{1}{2(\alpha-1)}\sum_{i=1}^{r}\left( X^2_{i+1:r:n}-X^2_{i:r:n}\right)\left[\left(1-\frac{i}{n+1}\right) -\left(1-\frac{i}{n+1}\right)^{\alpha} \right].$$
\end{p}
\begin{proof}
	The PC-II censoring becomes type-II censoring when $R_i = 0$ for $i=1, \ldots, r-1$, and $R_r=n-r$. Now $\beta_i=\frac{i+n-r}{i+1+n-r}$ for all $i$ and $\prod_{k=r-i+1}^{r}\beta_k=\frac{n-i+1}{n+1}$. Substituting this in (\ref{eN}) we get the result.
	\end{proof}
		
\begin{table}[h!]
	\centering
	\caption{Average Estimate and Variance of $\xi_{\alpha}^wP$ for U(0,1) and Exp(1) distributions.}\label{TN1}
	\begin{tabular}{p{1cm} p{1cm} p{3cm}| p{2cm} p{2cm}| p{2cm} p{1.5cm}}
		\hline
		& & & \multicolumn{2}{c|}{U(0,1)}&\multicolumn{2}{c}{Exp(1)}\\
		\hline
		$n$ & $r$ & Schemes & AE & Var & AE & Var\\
		\hline
		20 & 10 & ($10,0*9$)   & 0.08236 & 0.00026 & 0.88841 & 0.61174\\
		&    & ($0*9,10$)   & 0.02271 & 0.00010 & 0.04970  & 0.00118\\
		&    & ($5,0*8,5$)  & 0.04188 & 0.00025 & 0.13144 & 0.00848\\
		&    & ($1*10$)     & 0.06035 & 0.00050 & 0.29540 & 0.07601\\[1ex]
		& 15 & ($5,0*14$)   & 0.08470 & 0.00015 & 0.93225 & 0.44237\\
		&    & ($0*14,5$)   & 0.05593 & 0.00022 & 0.21008 & 0.01470\\
		&    &($0*7,5,0*7$) & 0.08487 & 0.00019 & 0.89215 & 0.50629\\
		&    & ($3,0*13,2$) & 0.07328 & 0.00022 & 0.41973 & 0.06648\\[1ex]
		30 & 10 & ($20,0*9$)   & 0.08228 & 0.00027 & 0.87797 & 0.60185\\
		&    & ($0*9,20$)   & 0.00824 & 0.00002 & 0.01364 & 0.00009\\
		&    &($10,0*8,10$) & 0.02180 & 0.00010 & 0.04666 & 0.00105\\
		&    & ($2*10$)     & 0.03872 & 0.00038 & 0.12773 & 0.01706\\
		& 20 & ($10,0*19$)  & 0.08501 & 0.00011 & 0.92785 & 0.34480\\
		&    & ($0*19,10$)  & 0.04610 & 0.00014 & 0.13597 & 0.00458\\
		&    & ($5,0*18,5$) & 0.06410 & 0.00018 & 0.27141 & 0.01824\\[1ex]
		50 & 30 & ($20,0*29$)  & 0.08500 & 0.00007 & 0.91889 & 0.22099\\
		&    & ($0*29,20$)  & 0.03784 & 0.00008 & 0.09597 & 0.00142\\
		&    &($10,0*28,10$)& 0.05880 & 0.00012 & 0.20955 & 0.00695\\\hline
	\end{tabular}
\end{table}
		
\subsection{Simulation Study} To assess the performance of the proposed estimator we use Monte-Carlo simulation. We generate 5000 PC-II censored samples from U(0,1) and Exp(1) distributions and calculate average estimate (AE) and variance of $\xi_{\alpha}^wP$. We consider $(n,r)$ = (20,10), (20,15), (30,10), (30,20) and (50,30) and $\alpha$ = 2. The results are presented in Table \ref{TN1} for various progressive type-II censoring schemes. In the table, the notation $a*b$ refers to `$a$ is repeated $b$ times'. For instance, $(5,0*8,5)$ refers to $(5,0,0,0,0,0,0,0,0,5)$. From the table, it is observed that when the sample size increases the variance of the estimator decreases for both the distributions. The overall performance of the estimator is good for both U(0,1) and Exp(1) distributions. As we have seen for complete samples, for PC-II censored data also, the estimator performs better for U(0,1) than Exp(1) distribution.\\

Chakraborty and Pradhan (2023) showed that WCRTE can be used as a risk (volatility) measure and they compared its performance with other popular risk measures such as standard deviation, right tail risk (Wang, 1998) using stock market (BSE SENSEX) data. Among all progressive censoring schemes, type-II censoring has the minimum volatility because it contains maximum information. From Table \ref{TN1}, we can see that for type-II censoring schemes the value of the AE of WCRTE is minimum and the corresponding variance of the estimator is also minimum for both the distributions. As the AE value increases, the variance of the estimator also increases. The PC-II censoring scheme where all the remaining items are removed after the first failure is called type-III censoring scheme (Pradhan and Kundu, 2009). Type-III censoring is opposite of type-II censoring and it has maximum volatility (minimum information) among all schemes. The AE values for type-III censoring are maximum, resulting in higher variance of the estimator $\xi_{\alpha}^wP$.

\section{Applications}\label{S6} In this section, we develop a goodness-of-fit test for uniform distribution using an estimator of WCRTE measure for complete and progressively type-II censored data. Uniform distribution is used for random number generation. Uniformity test is an important problem in statistics and this problem has been addressed by many researchers in the literature. For example, Stephens (1974) studied uniformity test using cdf-based statistic and Dudewicz and Van Der Meulen (1981) proposed an entropy-based test of uniformity. For recent development in this area, redears may refer to Noughabi (2022), Chakraborty et al. (2023) and the references therin.\\

\subsection{Uniformity Test for Complete Sample}
Let $X_1,X_2,\cdots,X_n$ be a random sample from a continuous distribution with pdf $f$ concentrated on [0,1], i.e. $f(x)=0$ if $x\notin$[0,1]. Suppose $X_{(1)},X_{(2)},\cdots,X_{(n)}$ are the corresponding order statistics. We want to test whether the data come from a U(0,1) distribution. So we want to test
\begin{eqnarray}
H_0:f(x)\sim U(0,1)\;\;\;\;vs.\;\;\;\;H_1:f(x)\not\sim U(0,1).\nonumber
\end{eqnarray}
Consider the estimator of WCRTE $$\hat{\xi}_{\alpha}^w(X)=\frac{1}{2(\alpha-1)}\sum_{i=1}^{n-1}\left( X^2_{(i+1)}-X^2_{(i)}\right) \left[ \left( 1-\frac{i}{n}\right) -\left( 1-\frac{i}{n}\right)^{\alpha}\right].$$ This estimator was introduced by Chakraborty and Pradhan (2023). We will use this estimator as a test statistic. Note that, when $\alpha\to1$, $\hat{\xi}_{\alpha}^w(X)$ reduces to $\hat{\xi}^w(X)$ where $\hat{\xi}^w(X)$ is the estimator of WCRE (Misagh et al., 2011).

\noindent The following lemma will be useful for the construction of the test.

\begin{lem}\label{l1}
	Let $X_1,X_2,\cdots,X_n$ be a random sample drawn from a continuous distribution concentrated on [0,1]. Then, $0\leq\hat{\xi}_{\alpha}^w(X)\leq\frac{1}{2\alpha^{\frac{\alpha}{\alpha-1}}}$ for $\alpha>1$.
\end{lem}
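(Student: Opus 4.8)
The plan is to establish the two inequalities separately, starting from the closed form
$$\hat{\xi}_{\alpha}^w(X)=\frac{1}{2(\alpha-1)}\sum_{i=1}^{n-1}\left(X^2_{(i+1)}-X^2_{(i)}\right)\left[\left(1-\tfrac{i}{n}\right)-\left(1-\tfrac{i}{n}\right)^{\alpha}\right].$$
Since the data are concentrated on $[0,1]$, all order statistics lie in $[0,1]$, so $X^2_{(i+1)}-X^2_{(i)}\geq 0$ for every $i$. For $\alpha>1$ and $u\in[0,1]$ one has $u\geq u^{\alpha}$, hence each bracket $\left(1-\tfrac{i}{n}\right)-\left(1-\tfrac{i}{n}\right)^{\alpha}\geq 0$; combined with $\alpha-1>0$ this gives $\hat{\xi}_{\alpha}^w(X)\geq 0$, which is the lower bound.

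For the upper bound, I would first bound $\hat{\xi}_{\alpha}^w(X)$ by relaxing $X_{(i)}\ge 0$ and $X_{(i+1)}\le 1$, which telescopes the weighted sum. Concretely, since the weights $w_i:=\frac{1}{2(\alpha-1)}\big[(1-\tfrac{i}{n})-(1-\tfrac{i}{n})^{\alpha}\big]$ are nonnegative, Abel summation (summation by parts) on $\sum_{i=1}^{n-1}(X^2_{(i+1)}-X^2_{(i)})w_i$ rewrites the expression as $\sum$ of $X^2_{(i)}$ times consecutive differences of the $w_i$ plus boundary terms; bounding $X^2_{(i)}\le 1$ reduces the problem to an explicit function of $u=i/n$. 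The cleanest route, though, is to observe that
$$\hat{\xi}_{\alpha}^w(X)\le \frac{1}{2(\alpha-1)}\max_{1\le i\le n-1}\Big[(1-\tfrac{i}{n})-(1-\tfrac{i}{n})^{\alpha}\Big]\cdot\sum_{i=1}^{n-1}(X^2_{(i+1)}-X^2_{(i)}),$$
wait — that is not quite the right grouping since the weight multiplies each increment; instead I would directly use that $X^2_{(i+1)}-X^2_{(i)}\ge 0$ sums (telescopically, after the Abel rearrangement) against a bounded weight. The upshot is that the supremum over admissible samples is attained in the limiting configuration where the mass of the increment sits at the index $i^{*}$ maximizing the bracket, giving $\hat{\xi}_{\alpha}^w(X)\le \frac{1}{2(\alpha-1)}\sup_{u\in[0,1]}\big(u-u^{\alpha}\big)$ with $u=1-i/n$.

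It then remains to compute $\sup_{u\in[0,1]}\frac{u-u^{\alpha}}{\alpha-1}$. Differentiating $g(u)=u-u^{\alpha}$ gives $g'(u)=1-\alpha u^{\alpha-1}=0$ at $u^{*}=\alpha^{-1/(\alpha-1)}$, which lies in $(0,1)$ for $\alpha>1$; then $g(u^{*})=u^{*}-u^{*\alpha}=\alpha^{-1/(\alpha-1)}\big(1-\tfrac{1}{\alpha}\big)=\alpha^{-1/(\alpha-1)}\cdot\frac{\alpha-1}{\alpha}$, so $\frac{g(u^{*})}{\alpha-1}=\frac{1}{\alpha}\alpha^{-1/(\alpha-1)}=\alpha^{-\alpha/(\alpha-1)}$. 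Multiplying by $\tfrac12$ yields the claimed bound $\frac{1}{2\alpha^{\alpha/(\alpha-1)}}$.

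The main obstacle is making the passage from "each increment is weighted by its own $w_i$" to "bounded by $\max_i w_i$ times the total variation $\le 1$" fully rigorous — naively pulling out the maximum is not valid term by term, so one must either invoke Abel summation carefully (tracking that the rearranged coefficients $w_i-w_{i-1}$ need not be one-signed) or, more simply, bound $X^2_{(i+1)}-X^2_{(i)}\le X^2_{(i+1)}-X^2_{(i)}$ trivially and note $\sum_{i}(X^2_{(i+1)}-X^2_{(i)})w_i\le \big(\sup_{u}\tfrac{u-u^\alpha}{2(\alpha-1)}\big)\sum_i (X^2_{(i+1)}-X^2_{(i)}) \le \sup_u\tfrac{u-u^\alpha}{2(\alpha-1)}$, where the last step uses $\sum_{i=1}^{n-1}(X^2_{(i+1)}-X^2_{(i)})=X^2_{(n)}-X^2_{(1)}\le 1$. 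I expect the authors take exactly this last, clean line, so the real content is just the one-variable optimization of $u-u^\alpha$.
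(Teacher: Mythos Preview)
Your proposal is correct and matches the paper's proof exactly: bound each bracket by $\max_{u\in[0,1]}(u-u^{\alpha})=(\alpha-1)\alpha^{-\alpha/(\alpha-1)}$, telescope $\sum_{i=1}^{n-1}(X^2_{(i+1)}-X^2_{(i)})=X^2_{(n)}-X^2_{(1)}\le 1$, and combine. Your mid-proof hesitation was unnecessary---since both the increments $X^2_{(i+1)}-X^2_{(i)}\ge 0$ and the weights $w_i\ge 0$, the inequality $\sum_i a_i w_i\le(\max_i w_i)\sum_i a_i$ is immediate, so no Abel summation is needed.
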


\begin{proof}
	Note that, $f(v)=(1-v)-(1-v)^{\alpha},\,0<v<1$, has the maximum value $\frac{\alpha-1}{\alpha^{\frac{\alpha}{\alpha-1}}}$ for $\alpha>1$.  So we have,
	$$\hat{\xi}^w_{\alpha}(X)\leq\frac{1}{2(\alpha-1)}\frac{(\alpha-1)}{\alpha^{\frac{\alpha}{\alpha-1}}}\sum_{i=1}^{n-1}(X^2_{(i+1)}-X^2_{(i)})=\frac{1}{2\alpha^{\frac{\alpha}{\alpha-1}}}(X^2_{(n)}-X^2_{(1)})\leq\frac{1}{2\alpha^{\frac{\alpha}{\alpha-1}}}.$$
	Hence the result follows.
\end{proof}

Under $H_0$, $\xi_{\alpha}^w(X)=\frac{(\alpha+4)}{6(\alpha+1)(\alpha+2)}$, which lies between $\left(0,\frac{1}{2\alpha^{\frac{\alpha}{\alpha-1}}}\right)$. Therefore, we can use $\hat{\xi}_{\alpha}^w(X)$ as a test statistic and we will reject the null hypothesis when $\hat{\xi}_{\alpha}^w(X)$ is large or small.The critical region for a sample of size $n$ and significance level $\gamma$ is $$ \hat{\xi}_{\alpha}^w(X)\leq\mathcal{C}_{\frac{\gamma}{2},n}\;\;or\;\;\hat{\xi}_{\alpha}^w(X)\geq\mathcal{C}_{1-\frac{\gamma}{2},n},$$ where $\mathcal{C}_{\gamma,n}(\mathcal{C}_{1-\gamma,n})$ is the lower (upper) quantile point of the empirical distribution of $\hat{\xi}_{\alpha}^w(X)$. Note that, when $\alpha=6.586506$, the value of $\xi_{\alpha}^w(X)$ under $H_0$ lies in the middle of $\left(0,\frac{1}{2\alpha^{\frac{\alpha}{\alpha-1}}}\right)$.

\begin{table}[h!]
	\centering
	\caption{Critical values of $\hat{\xi}^w_{\alpha}(X)$.}\label{tab7}
	\begin{tabular}{p{.5cm} p{1.5cm} p{2cm} p{2cm} p{.5cm} p{1.5cm} p{2cm} p{2cm}}
		\hline
		$n$ & $\alpha$ & $\mathcal{C}_{0.025,n}$ & $\mathcal{C}_{0.975,n}$ & $n$ & $\alpha$ & $\mathcal{C}_{0.025,n}$ & $\mathcal{C}_{0.975,n}$\\
		\hline
		10 & 1 & 0.06755 & 0.15720 & 15 & 1 & 0.08561 & 0.15572\\
		& 2 & 0.04104 & 0.10149 &    & 2 & 0.05006 & 0.09995\\
		& 5 & 0.01700 & 0.04822 &    & 5 & 0.02056 & 0.04663\\
		& 7 & 0.01175 & 0.03553 & &    7 & 0.01431 & 0.03407\\
		& 10 & 0.00823 & 0.02545 & & 10 & 0.00992 & 0.02442\\[1ex]
		
		20 & 1 & 0.09670 & 0.15478 & 25 & 1 & 0.10236 & 0.15404\\
		& 2 & 0.05553 & 0.09875 &    & 2 & 0.05888 & 0.09742\\
		& 5 & 0.02248 & 0.04581 &    & 5 & 0.02383 & 0.04495\\
		& 7 & 0.01584 & 0.03327 & &    7 & 0.01691 & 0.03273\\
		& 10 & 0.01093 & 0.02360 & & 10 & 0.01170 & 0.02300\\[1ex] 
		
		30 & 1 & 0.10791 & 0.15291 & 35 & 1 & 0.10948 & 0.15209\\
		& 2 & 0.06160 & 0.09629 &    & 2 & 0.06362 & 0.09556\\
		& 5 & 0.02508 & 0.04392 &    & 5 & 0.02594 & 0.04355\\
		& 7 & 0.01780 & 0.03209 & &    7 & 0.01840 & 0.03158\\
		& 10 & 0.01215 & 0.02252 & & 10 & 0.01265 & 0.02226\\[1ex]
		
		40 & 1 & 0.11240 & 0.15153 & 45 & 1 & 0.11442 & 0.15112\\
		& 2 & 0.06460 & 0.09505 &    & 2 & 0.06611 & 0.09450\\
		& 5 & 0.02643 & 0.04316 &    & 5 & 0.02730 & 0.04286\\
		& 7 & 0.01883 & 0.03137 & &    7 & 0.01925 & 0.03116\\
		& 10 & 0.01294 & 0.02202 & & 10 & 0.01320 & 0.02180\\[1ex] 
		
		50 & 1 & 0.11610 & 0.15061 & 60 & 1 & 0.11894 & 0.15022\\
		& 2 & 0.06712 & 0.09420 &    & 2 & 0.06870 & 0.09370\\
		& 5 & 0.02758 & 0.04248 &    & 5 & 0.02850 & 0.04196\\
		& 7 & 0.01940 & 0.03073 & &    7 & 0.02021 & 0.03044\\
		& 10 & 0.01347 & 0.02167 & & 10 & 0.01392 & 0.02121\\[1ex]
		
		70 & 1 & 0.12029 & 0.14941 & 80 & 1 & 0.12218 & 0.14885\\
		& 2 & 0.06980 & 0.09252 &    & 2 & 0.07075 & 0.09232\\
		& 5 & 0.02884 & 0.04162 &    & 5 & 0.02927 & 0.04134\\
		& 7 & 0.02051 & 0.03004 & &    7 & 0.02073 & 0.02971\\
		& 10 & 0.01416 & 0.02106 & & 10 & 0.01429 & 0.02082\\[1ex]
		
		90 & 1 & 0.12325 & 0.14842 & 100 & 1 & 0.12441 & 0.14791\\
		& 2 & 0.07165 & 0.09202 &    & 2 & 0.07242 & 0.09160\\
		& 5 & 0.02974 & 0.04096 &    & 5 & 0.02999 & 0.04071\\
		& 7 & 0.02103 & 0.02955 & &    7 & 0.02119 & 0.02926\\
		& 10 & 0.01448 & 0.02064 & & 10 & 0.01474 & 0.02050\\
		\hline
	\end{tabular}
\end{table}

\begin{lem}\label{l2}
	The test based on $\hat{\xi}_{\alpha}^w(X)$ is consistent.
\end{lem}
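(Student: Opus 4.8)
The plan is to establish consistency in the usual sense for a functional-based goodness-of-fit test, namely that under a fixed alternative the power tends to $1$. Accordingly, fix a continuous distribution $F$ concentrated on $[0,1]$ with $\xi_{\alpha}^w(F)\neq\xi_{\alpha,0}$, where $\xi_{\alpha,0}=\frac{\alpha+4}{6(\alpha+1)(\alpha+2)}$ denotes the value of WCRTE under $H_0$. The first step is to show that $\hat{\xi}_{\alpha}^w(X)\stackrel{p}{\to}\xi_{\alpha}^w(F)$. Writing $\hat{\xi}_{\alpha}^w(X)$ in its integral form as in Eq. (\ref{e12}), it is a functional of the empirical survival function $\bar{F}_n$ on the bounded interval $[0,1]$; since $\sup_{x}|\bar{F}_n(x)-\bar{F}(x)|\to 0$ almost surely by the Glivenko--Cantelli theorem and the integrand is uniformly bounded on $[0,1]$, the bounded convergence theorem yields $\hat{\xi}_{\alpha}^w(X)\to\xi_{\alpha}^w(F)$ almost surely, hence in probability.

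Specialising this to $H_0$, i.e. $F=U(0,1)$, gives $\hat{\xi}_{\alpha}^w(X)\stackrel{p}{\to}\xi_{\alpha,0}$, so under $H_0$ the statistic has a degenerate limiting distribution concentrated at $\xi_{\alpha,0}$. Consequently, for every $p\in(0,1)$ the empirical $p$-quantile of $\hat{\xi}_{\alpha}^w(X)$ converges to $\xi_{\alpha,0}$; in particular $\mathcal{C}_{\gamma/2,n}\to\xi_{\alpha,0}$ and $\mathcal{C}_{1-\gamma/2,n}\to\xi_{\alpha,0}$.

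To finish, set $\delta=|\xi_{\alpha}^w(F)-\xi_{\alpha,0}|>0$. For all sufficiently large $n$ the acceptance region $[\mathcal{C}_{\gamma/2,n},\mathcal{C}_{1-\gamma/2,n}]$ is contained in $(\xi_{\alpha,0}-\delta/2,\ \xi_{\alpha,0}+\delta/2)$, whereas by the first step $\hat{\xi}_{\alpha}^w(X)$ lies within $\delta/2$ of $\xi_{\alpha}^w(F)$ with probability tending to $1$. Since $\xi_{\alpha}^w(F)$ is at distance $\delta$ from $\xi_{\alpha,0}$, these two events are incompatible in the limit, so the probability of rejecting $H_0$ under $F$ tends to $1$, which is the asserted consistency.

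The step I expect to be the main obstacle is the convergence of the (simulated) critical values $\mathcal{C}_{\gamma/2,n}$ and $\mathcal{C}_{1-\gamma/2,n}$ to $\xi_{\alpha,0}$. This relies on the statistic being consistent under $H_0$, established in the first step, combined with the elementary fact that the quantile function of the degenerate law at $\xi_{\alpha,0}$ equals $\xi_{\alpha,0}$ at every level in $(0,1)$ and is continuous there; convergence of the quantiles then follows from weak convergence of $\hat{\xi}_{\alpha}^w(X)$ under $H_0$ to that degenerate law. One should also note, as is typical for such tests, that the consistency obtained is against those alternatives that are separated from $H_0$ in the value of WCRTE.
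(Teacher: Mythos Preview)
Your argument is correct and in fact more complete than the paper's. Both you and the paper first establish that $\hat{\xi}_{\alpha}^w(X)$ is a consistent estimator of $\xi_{\alpha}^w(F)$; the paper does this in a more general form by splitting the integral as $\int_0^1+\int_1^{\infty}$ and invoking a moment bound from Rao et al.\ (2004) for the tail piece, whereas you exploit the standing hypothesis that $F$ is concentrated on $[0,1]$ and get the result directly from Glivenko--Cantelli plus bounded convergence. The paper then stops after noting that $\hat{\xi}_{\alpha}^w(X)\to\xi_{\alpha,0}$ under $H_0$, leaving the passage from \emph{estimator} consistency to \emph{test} consistency implicit. You make that passage explicit: the critical values $\mathcal{C}_{\gamma/2,n}$ and $\mathcal{C}_{1-\gamma/2,n}$ collapse to $\xi_{\alpha,0}$ because the null limit is degenerate, while under any alternative with $\xi_{\alpha}^w(F)\neq\xi_{\alpha,0}$ the statistic concentrates at a different point, forcing the rejection probability to $1$. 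Your closing caveat, that consistency holds only against alternatives separated from $H_0$ in the value of WCRTE, is a correct qualification that the paper does not mention.
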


\begin{proof}
	We have, 
	\begin{eqnarray}\label{e23}
	\hat{\xi}_{\alpha}^w(X)&=&\frac{1}{\alpha-1}\int_{0}^{+\infty}x\left(\bar{F}_n(x)-\bar{F}^{\alpha}_n(x) \right)dx\nonumber\\
	&=&\frac{1}{\alpha-1}\left[ \int_{0}^{1}x\left(\bar{F}_n(x)-\bar{F}^{\alpha}_n(x) \right)dx+\int_{1}^{+\infty}x\left(\bar{F}_n(x)-\bar{F}^{\alpha}_n(x)\right)dx\right].
	\end{eqnarray}
	Using Glivenco-Cantelli theorem, the first integral in Eq. (\ref{e23}) converges to\\ $\int_{0}^{1}x(\bar{F}(x)-\bar{F}^{\alpha}(x))dx$.\\ We have from Rao et al. (2004), for $x\in[1,+\infty)$,
	$$\bar{F}_n(x)\leq x^{-p}\left(\sup_n \frac{1}{n}\sum_{i=1}^{n}X_i^p\right).$$
	
	So, $\int_{1}^{+\infty}x\bar{F}_n(x)dx\leq\left(\sup_n \frac{1}{n}\sum_{i=1}^{n}X_i^p\right)\int_{1}^{+\infty}x^{1-p}dx$, converges for $p>2$. Similarly, $\int_{1}^{+\infty}x\bar{F}^{\alpha}_n(x)dx$ converges for $p>\frac{2}{\alpha}.$\\
	Therefore, $\hat{\xi}_{\alpha}^w(X)$ is a consistent estimator. This implies $\hat{\xi}_{\alpha}^w(X)\to \frac{(\alpha+4)}{6(\alpha+1)(\alpha+2)}$ under $H_0$. Hence the result.
\end{proof}

We can also develop uniformity test using $\hat{\xi}^w(X)$ which is given by
$$\hat{\xi}^w(X)=-\frac{1}{2}\sum_{i=1}^{n-1}\left( X^2_{(i+1)}-X^2_{(i)}\right)\left( 1-\frac{i}{n}\right)\log \left(  1-\frac{i}{n}\right) .$$
\begin{lem}
		Let $X_1,X_2,\cdots,X_n$ be a random sample drawn from a continuous distribution concentrated on [0,1]. Then, $0\leq\hat{\xi}^w(X)\leq\frac{1}{2e}$.
\end{lem}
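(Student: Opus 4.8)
The plan is to mimic the argument used in the proof of Lemma \ref{l1}, replacing the function $f(v)=(1-v)-(1-v)^{\alpha}$ by the weight function appearing in $\hat{\xi}^w(X)$, namely $g(v)=-(1-v)\log(1-v)$ on $(0,1)$. First I would observe that each increment $X^2_{(i+1)}-X^2_{(i)}$ is non-negative because the order statistics are increasing, and that for $0<v<1$ we have $1-v\in(0,1)$, hence $\log(1-v)<0$ and $-(1-v)\log(1-v)>0$. Writing
$$\hat{\xi}^w(X)=\frac{1}{2}\sum_{i=1}^{n-1}\bigl(X^2_{(i+1)}-X^2_{(i)}\bigr)\Bigl[-\bigl(1-\tfrac{i}{n}\bigr)\log\bigl(1-\tfrac{i}{n}\bigr)\Bigr],$$
every summand is non-negative, which gives $\hat{\xi}^w(X)\geq 0$.

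For the upper bound I would maximise $g(v)=-(1-v)\log(1-v)$ over $(0,1)$. Putting $u=1-v$, this equals $-u\log u$ on $(0,1)$; differentiating gives $-\log u-1=0$, so the maximum is attained at $u=e^{-1}$ with value $-e^{-1}\log e^{-1}=e^{-1}$. Hence $-(1-\tfrac{i}{n})\log(1-\tfrac{i}{n})\leq e^{-1}$ for every $i$, and therefore
$$\hat{\xi}^w(X)\leq\frac{1}{2e}\sum_{i=1}^{n-1}\bigl(X^2_{(i+1)}-X^2_{(i)}\bigr)=\frac{1}{2e}\bigl(X^2_{(n)}-X^2_{(1)}\bigr).$$
Since the sample is concentrated on $[0,1]$ we have $0\leq X_{(1)}\leq X_{(n)}\leq 1$, so $X^2_{(n)}-X^2_{(1)}\leq 1$, which yields $\hat{\xi}^w(X)\leq\frac{1}{2e}$ and completes the argument.

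There is no genuine obstacle here; the only mildly non-routine step is identifying $e^{-1}$ as the maximal value of $-(1-v)\log(1-v)$ on $(0,1)$, which is a one-line calculus computation and is the exact analogue of the evaluation of $\tfrac{\alpha-1}{\alpha^{\alpha/(\alpha-1)}}$ in Lemma \ref{l1}. Alternatively one could obtain the bound by letting $\alpha\to 1$ in Lemma \ref{l1}, using $\alpha^{\alpha/(\alpha-1)}\to e$ together with $\hat{\xi}_{\alpha}^w(X)\to\hat{\xi}^w(X)$ as $\alpha\to 1$, but the direct telescoping argument above is cleaner and self-contained.
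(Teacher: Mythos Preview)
Your proof is correct and follows exactly the approach the paper intends: bound the weight $g(v)=-(1-v)\log(1-v)$ by its maximum on $(0,1)$ and telescope, just as in Lemma~\ref{l1}. In fact your computation of the maximum as $e^{-1}$ is correct, whereas the paper's own proof contains a typo stating the maximum is $\tfrac{1}{2}$; the final bound $\tfrac{1}{2e}$ is only consistent with your value.
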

\begin{proof}
	The function $g(v)=-(1-v)\log(1-v),\;0<v<1,$ has maximum value $\frac{1}{2}$. Proceeding along the same line as in Lemma \ref{l1}, we can get the desired result.
\end{proof}

Under $H_0$, $\xi^w(X)=\frac{5}{36}=0.1389$ which lies between $(0,\frac{1}{2e})$. Note that, $\frac{1}{2e}=0.1839$. So we can use $\hat{\xi}^w(X)$ as a test statistic for testing uniformity and we will reject the null hypothesis when either $\hat{\xi}^w(X)<C_1$ or $\hat{\xi}^w(X)>C_2$, where $C_1$ and $C_2$ will be calculated from the size condition. On using Theorem 7.1 of Mirali et al. (2017) we can see that, $\hat{\xi}^w(X)$ is consistent. So the uniformity test based on $\hat{\xi}^w(X)$ is also consistent.

\begin{table}[h!]
	\centering
	\caption{Power of the tests for various sample sizes.} \label{tab8}
	\resizebox{16cm}{!}{
		\begin{tabular}{c c c c c c c c c c c}
			\hline
			$n$ & Alternatives & $\alpha=1$ & $\alpha=2$ & $\alpha=5$ & $\alpha=7$ & $\alpha=10$ & ENT & KS & CvM & AD \\[1ex]
			\hline
			10 & $A_{1.5}$ & 0.1400 & 0.1742 & \bf0.1905 & 0.1683 & 0.1860 & 0.1420 & 0.1489 & 0.1660 & 0.1608\\
			& $A_{2}$      & 0.3455 & 0.4228 & 0.4675 & 0.4609 & \bf0.4740 & 0.2893 & 0.3762 & 0.4378 & 0.4091  \\
			& $B_{1.5}$    & 0.0915 & 0.0865 & 0.0581 & 0.0456 & 0.0447 & \bf0.1884 & 0.0421 & 0.0330 & 0.0200\\
			& $B_{2}$      & 0.2263 & 0.2055 & 0.1209 & 0.0907 & 0.0804 & \bf0.4363 & 0.0398 & 0.0226 & 0.0081\\
			& $B_{3}$      & 0.5612 & 0.5549 & 0.3974 & 0.2917 & 0.2526 & \bf0.7958 & 0.0908 & 0.0486 & 0.0182\\
			& $C_{1.5}$    & 0.1004 & 0.1191 & 0.1310 & 0.1222 & 0.1063 & 0.0302 & 0.1137 & 0.0978 & \bf0.1354\\
			& $C_{2}$      & 0.1807 & 0.2184 & \bf0.2343 & 0.2063 & 0.1911 & 0.0330 & 0.2003 & 0.1491 & 0.2180\\[1ex]
			
			20 & $A_{1.5}$ & 0.2682 & 0.3230 & 0.3443 & 0.3468 & \bf0.3543 & 0.2411 & 0.2841 & 0.3184 & 0.3065\\
			& $A_{2}$      & 0.6483 & 0.7555 & 0.8063 & 0.8105 & \bf0.8163 & 0.6207 & 0.7038 & 0.7726 & 0.7480\\
			& $B_{1.5}$    & 0.2217 & 0.1739 & 0.0964 & 0.0762 & 0.0674 & \bf0.3039 & 0.0555 & 0.0502 & 0.0258\\
			& $B_{2}$      & 0.5753 & 0.5012 & 0.2839 & 0.2014 & 0.1627 & \bf0.7115 & 0.1176 & 0.0972 & 0.1003\\
			& $B_{3}$      & 0.9615 & 0.9466 & 0.8079 & 0.7058 & 0.5753 & \bf0.9905 & 0.4201 & 0.5089 & 0.5639\\
			& $C_{1.5}$    & 0.1477 & \bf0.1732 & 0.1603 & 0.1516 & 0.1308 & 0.0589 & 0.1455 & 0.1224 & 0.1633\\
			& $C_{2}$      & 0.3210 & \bf0.3897 & 0.3153 & 0.2806 & 0.2313 & 0.1461 & 0.3092 & 0.2538&0.3846\\
			[1ex]		
			
			30 & $A_{1.5}$ & 0.3971 & 0.4644 & 0.5068 & \bf0.5357  & 0.5201 & 0.3268 & 0.3988 & 0.4713 & 0.4667\\
			& $A_{2}$      & 0.8451 & 0.9204 & 0.9468 & \bf0.9506  & 0.9453 & 0.8201 & 0.8617 & 0.9192 & 0.9173  \\
			& $B_{1.5}$    & 0.3567 & 0.2768 & 0.1495 & 0.1202  & 0.0838 & \bf0.4047 & 0.0788 & 0.0585 & 0.0600\\
			& $B_{2}$      & 0.8232 & 0.7506 & 0.4788 & 0.3880  & 0.2799 &\bf0.8849 & 0.2413 & 0.8044 & 0.3011\\
			& $B_{3}$      & 0.9989 & 0.9968 & 0.9653 & 0.9263  & 0.8299 & \bf0.9991 & 0.7320 & 0.9969 & 0.9259\\
			& $C_{1.5}$    & 0.2061 & \bf0.2567 & 0.2233 & 0.1803  & 0.1594 & 0.1112 & 0.1836 & 0.3184 & 0.2022\\
			& $C_{2}$      & 0.4695 & \bf0.5798 & 0.4183 & 0.3313  & 0.2837 & 0.3577 & 0.4403 & 0.4016 & 0.5296\\
			[1ex]			
			\hline
	\end{tabular}}
\end{table}

\subsubsection{Critical Points and Power of the Tests}
The exact distributions of $\hat{\xi}_{\alpha}^w(X)$ and $\hat{\xi}^w(X)$ are intractable. We use simulations to obtain the critical points for various choices of $\alpha$ and $n$. We generate 10000 random samples from U(0,1) distribution with different sample sizes and calculate the critical points at 5\% level of significance. The critical points are provided in Table \ref{tab7}. In the table, by $\alpha=1$ we mean $\alpha\to1$ i.e WCRE. We take $\alpha$ = 1, 2, 5, 7 and 10, and calculate the quantile points for $n$ = 10(5)50 and 50(10)100. For power calculation, we consider the following seven alternative distributions. The cdfs of these distributions are as follows:
\begin{eqnarray}
A_j:F(z)&=&1-(1-z)^j,\;\;\;0\leq z\leq1\;(j=1.5,\;2)\nonumber\\[1ex]
B_j:F(z)&=&\begin{cases}
2^{j-1}z^j,\;\;\;\;\;\;\;\;\;\;\;\;\;\;\;\;\;0\leq z\leq 0.5\\
\;\;\;\;\;\;\;\;\;\;\;\;\;\;\;\;\;\;\;\;\;\;\;\;\;\;\;\;\;\;\;\;\;\;\;\;\;\;\;\;\;\;\;\;\;\;\;(j=1.5,\;2,\;3)\\
1-2^{j-1}(1-z)^j,\,\,0.5\leq z\leq 1 
\end{cases}\nonumber\\[1ex]
C_j:F(z)&=&\begin{cases}
0.5-2^{j-1}(0.5-z)^j,\;\;0\leq z\leq 0.5\\
\;\;\;\;\;\;\;\;\;\;\;\;\;\;\;\;\;\;\;\;\;\;\;\;\;\;\;\;\;\;\;\;\;\;\;\;\;\;\;\;\;\;\;\;\;\;\;\;\;\;\;\;\;(j=1.5,\;2)\\
0.5+2^{j-1}(z-0.5)^j,\,\,0.5\leq z\leq 1 	
\end{cases}\nonumber
\end{eqnarray} 

These alternative distributions were first considered by Stephens (1974). The alternatives $A_j$'s provide points close to 0 than expected under uniformity and it is interpreted as a change in mean, the alternatives $B_j$'s provide points close to the middle i.e. 0.5 and it is interpreted as a change towards smaller variance, and finally, the alternatives $C_j$'s provide points towards both the extremes viz. 0 and 1, and it is interpreted as a shift towards larger variance. We compare power of our proposed tests with Kolmogorov-Smirnov (KS), Cramer-von Mises (CvM), Anderson-Darling (AD) tests and Vasicek sample entropy-based test of Dudewicz and Van Der Meulen (1981). The test statistic is defined as $$H_{mn}=\frac{1}{n}\sum_{k=1}^{n}\log[\frac{n}{2m}(X_{(k+m)}-X_{(k-m)})],$$ where $m$ is the window size. Under $H_0$, $H_{mn}$ converges to zero and it is less than zero otherwise. So, we will reject $H_0$ when $H_{mn}$ is small. This test is called the ENT test. We compute the power for $n$ = 10, 20 and 30, and present the findings in Table \ref{tab8}. From Table \ref{tab8}\\ we observe that, ENT test of Dudewicz and Van Der Meulen (1981) performs better than other tests for alternatives $B_j$'s. Our proposed tests perform better than all the tests for alternatives $A_j$'s and $C_j$'s. When $\alpha$ = 2, proposed test performs better for alternative $C_j$'s and for higher values of $\alpha$ the proposed tests perform better for alternative $A_j$'s. Also note that, the WCRTE-based tests perfrom better than the test based on WCRE ($\alpha\to$1). The alternatives $A_j$'s and $C_j$'s provides points to the extreme. Our tests are based on weighted measures that not only consider the probabilistic information of the random variables but also the values that the random variables are taking. Thus, they are less influenced by the extreme observations. Therefore, we can rely on these measures when inference has to be made based on extreme observations and tail probabilities.

\pagebreak

\subsection{Uniformity under Progressively Type-II Censored Data}
Now we extend the uniformity test for complete sample to progressively type-II censored sample. Let $X_{1:r:n},\cdots,X_{r:r:n}$ be a progressively type-II censored sample with censoring scheme $(R_1,R_2,\cdots,R_r)$ from a continuous rv having density $f$ concentrated on [0,1]. Based on this sample, we want to test whether $f$ is uniformly distributed or not. We consider the estimator $\xi_{\alpha}^wP$ with $\alpha$ = 2. From Proposition \ref{P1}, we found that $\xi_{\alpha}^wP$ is a consistent estimator when $r\to n$ i.e. $\xi_{\alpha}^wP$ reduces to $\hat{\xi}_{\alpha}^w(X)$ as $r\to n$. From Lemma \ref{l2} we have that $\hat{\xi}_{\alpha}^w(X)$ lies between (0,0.125) for $\alpha$ = 2. So, we can use $\xi_{\alpha}^wP$ as a test statistic for testing uniformity when the data are progressively type-II censored. Also note that, in Table \ref{TN1} for U(0,1) all the average values of $\xi_{\alpha}^wP$ lie within the interval (0,0.125). 
\begin{table}[h!]
	\centering
	\caption{Critical points of $\xi_{\alpha}^wP$.}\label{TN2}
	\begin{tabular}{p{1cm} p{1cm} p{3cm} p{2cm} p{2cm}}
		\hline
		$n$ & $r$ & Schemes & $D_{0.025:r:n}$ & $D_{0.975:r:n}$\\
		\hline
		20 & 10 & ($10,0*9$)   & 0.04620 & 0.10704\\
		&    & ($0*9,10$)   & 0.00712 & 0.04500\\
		&    & ($5,0*8,5$)  & 0.01471 & 0.07560\\
		&    & ($1*10$)     & 0.01991 & 0.10209\\[1ex]
		& 15 & ($5,0*14$)   & 0.05614 & 0.10440\\
		&    & ($0*14,5$)   & 0.02711 & 0.08558\\
		&    &($0*7,5,0*7$) & 0.05259 & 0.10666\\
		&    & ($3,0*13,2$) & 0.04077 & 0.09848\\[1ex]
		30 & 10 & ($20,0*9$)   & 0.04466 & 0.10752\\
		&    & ($0*9,20$)   & 0.00210 & 0.10866\\
		&    &($10,0*8,10$) & 0.00632 & 0.04544\\
		&    & ($2*10$)     & 0.00975 & 0.08349\\
		& 20 & ($10,0*19$)  & 0.06153 & 0.10184\\
		&    & ($0*19,10$)  & 0.02377 & 0.07049\\
		&    & ($5,0*18,5$) & 0.03758 & 0.08940\\[1ex]
		50 & 30 & ($20,0*29$)  & 0.06725 & 0.09926\\
		&    & ($0*29,20$)  & 0.02200 & 0.05540\\
		&    &($10,0*28,10$)& 0.03750 & 0.08060\\\hline
	\end{tabular}
\end{table}
We reject the hypothesis if $\xi_{\alpha}^wP$ is large or small. The critical region for significance level $\gamma$ is $$\xi_{\alpha}^wP\leq D_{1-\frac{\gamma}{2}:r:n}\;\;\;\text{or}\;\;\;\xi_{\alpha}^wP\geq D_{\frac{\gamma}{2},r:n},$$ where $D_{\gamma}$ is the $\gamma$-th quantile point of the empirical distribution of $\xi_{\alpha}^wP$. We simulate 5000 PC-II censored samples from U(0,1) distribution and calculate the critical points at 5\% level of significance. The critical points are provided in Table \ref{TN2}. We compute power of the test for various progressive censoring schemes and for $\alpha$ = 2. We consider the same alternatives given by Stephens (1974). We generate 5000 PC-II censored samples from these alternatives and compute the power of the test and present them in Table \ref{TN3}. From the table we observe that the proposed test attains the nominal level of significance for all the censoring schemes. As the sample size increases the power also increases. In general, the performance of the proposed test is good for alternatives $A_j$'s and $C_j$'s. We have observed this phenomenon for complete data as well. Proposed test also performs well for alternative $B_2$. For alternatives $B_{1.5}$ and $B_2$, the test works well when sample size is high.

\begin{table}[h!]
	\centering
	\caption{Power of the test for various schemes for $\alpha$ = 2.}\label{TN3}
	\begin{tabular}{|c c c |c| c c| c c c| c c|}
		\hline
		& & & \multicolumn{8}{c|}{Alternatives}\\
		\hline
		$n$ & $r$ & Schemes & U(0,1) & $A_{1.5}$ & $A_2$ & $B_{1.5}$ & $B_2$ & $B_2$ & $C_{1.5}$ & $C_2$\\
		\hline
		20 & 10 & ($10,0*9$) & 0.054 & 0.154 & 0.373 & 0.097 & 0.214 & 0.556 & 0.109 & 0.201\\
		&    & ($0*9,10$)   & 0.053 & 0.185 & 0.488 & 0.013 & 0.006 & 0.010 & 0.263 & 0.496\\
		&    & ($5,0*8,5$)  & 0.049 & 0.168 & 0.433 & 0.023 & 0.030 & 0.096 & 0.175 & 0.370\\
		&    & ($1*10$)     & 0.050 & 0.135 & 0.299 & 0.034 & 0.057 & 0.154 & 0.106 & 0.184\\[1ex]
		& 15 & ($5,0*14$)   & 0.048 & 0.207 & 0.553 & 0.137 & 0.352 & 0.818 & 0.130 & 0.304\\
		&    & ($0*14,5$)   & 0.051 & 0.223 & 0.578 & 0.057 & 0.123 & 0.494 & 0.171 & 0.374\\
		&    &($0*7,5,0*7$) & 0.047 & 0.183 & 0.465 & 0.107 & 0.296 & 0.709 & 0.114 & 0.241\\
		&    & ($3,0*13,2$) & 0.050 & 0.214 & 0.570 & 0.097 & 0.257 & 0.710 & 0.152 & 0.314\\[1ex]
		30 & 10 & ($20,0*9$)   & 0.049 & 0.148 & 0.348 & 0.088 & 0.200 & 0.496 & 0.097 & 0.198\\
		&    & ($0*9,20$)   & 0.051 & 0.162 & 0.434 & 0.015 & 0.002 & 0.001 & 0.146 & 0.319\\
		&    &($10,0*8,10$) & 0.048 & 0.167 & 0.437 & 0.008 & 0.004 & 0.004 & 0.231 & 0.464\\
		&    & ($2*10$)     & 0.0.047 & 0.142 & 0.317 & 0.013 & 0.014 & 0.025 & 0.152 & 0.270\\
		& 20 & ($10,0*19$)  & 0.052 & 0.290 & 0.714 & 0.180 & 0.519 & 0.947 & 0.183 & 0.424\\
		&    & ($0*19,10$)  & 0.050 & 0.322 & 0.762 & 0.046 & 0.124 & 0.529 & 0.255 & 0.544\\
		&    & ($5,0*18,5$) & 0.053 & 0.324 & 0.750 & 0.110 & 0.335 & 0.832 & 0.185 & 0.418\\[1ex]
		50 & 30 & ($20,0*29$)  & 0.053 & 0.461 & 0.916 & 0.318 & 0.787 & 0.999 & 0.264 & 0.585\\
		&    & ($0*29,20$)  & 0.054 & 0.492 & 0.939 & 0.044 & 0.144 & 0.650 & 0.393 & 0.716\\
		&    &($10,0*28,10$)& 0.049 & 0.470 & 0.920 & 0.150 & 0.481 & 0.956 & 0.256 & 0.596\\\hline
		
	\end{tabular}
\end{table}

\subsection{Data Analysis} The problem of testing uniformity can be implemented in any general foodness-of-fit test. For example, consider the problem of testing 

\begin{eqnarray}
H_0:X\sim f(X)\;\;\;\;vs.\;\;\;\;H_1:X\not\sim f(X).\nonumber
\end{eqnarray}
Using the transformation $Y=F(X)$, the above problem becomes 

\begin{eqnarray}
H_0:Y\sim U(0,1)\;\;\;\;vs.\;\;\;\;H_1:Y\not\sim U(0,1).\nonumber
\end{eqnarray}

\noindent Now consider the failure times for insulating fluid dataset given in Subsection \ref{D1}. This dataset follows exponential distribution with MLE, $\hat{\lambda}$ = 0.0696. Now, if we want to test exponentiality based on this dataset, then using the transformation $Y=1-\exp(-\hat{\lambda}X)$, the problem reduces to testing uniformity. The transformed data are as follows:

\begin{table}[h!]
	\centering
	\begin{tabular}{cccccccc}
		0.01314, & 0.05284, & 0.06463, & 0.08714, & 0.17592, & 0.19743, & 0.25087, & 0.27750,\\ 0.28650, & 0.36390, & 0.40044, & 0.42736, & 0.43763, & 0.56802, & 0.89028, & 0.89600,\\ 0.90559, & 0.92231 & 0.99374.
	\end{tabular}
\end{table}

\noindent Based on these 19 observations, the estimated WCRE value is 0.1534. Note that the true value of WCRE for U(0,1) distribution is 0.1389. The 5\% critical points for WCRE-based uniformity test with $n$ = 19 are (0.09423, 0.15482). Since, the estimated test statistic lies within the critical points, we cannot reject the null hypothesis that $Y$ is uniformly distributed over [0,1]. This implies that we cannot reject the null hypothesis that $X$ is exponentially distributed. Similarly, using WCRTE we can perform uniformity test. For $\alpha$ = 2, the value of the estimated test statistic is 0.0880. The true value of WCRTE for U(0,1) is 0.0834. The 5\% critical points are (0.05453, 0.09871). Since the estimated value of WCRTE lies between the critical points, we cannot reject the null hypothesis of uniformity of $Y$ over [0,1] and consequently exponentiality of $X$.\\

We now apply the proposed uniformity test for PC-II censored samples. We generate PC-II censored data from the above dataset with $r$ = 10. The generated observations and the associated censoring schemes are provided below.

\begin{itemize}
	\item Scheme 1: ($R_1=9, R_2=\cdots=R_{10}=0$).\\
	\{0.0134, 0.05284, 0.06463, 0.19743, 0.25087, 0.28650, 0.40044, 0.42736, 0.56802, 0.89028\}
	
	\item Scheme 2: ($R_1=\cdots=R_{9}=0,R_{10}=9$).\\
	\{0.01314, 0.05284, 0.06463, 0.08714, 0.17592, 0.19743, 0.25087, 0.27750, 0.28650, 0.36390\}
	
	\item Scheme 3: ($R_1=5, R_2=\cdots=R_9=0, R_{10}=4$).\\
	\{0.01314, 0.05284, 0.06463, 0.08714, 0.17592, 0.25087, 0.27750, 0.40044, 0.42736, 0.43763\}
	
	\item Scheme 4: ($R_1=\cdots=R_9=1, R_{10}=0$).\\
	\{0.01314, 0.05284, 0.06463, 0.08714, 0.17592, 0.19743, 0.25087, 0.36390, 0.40044, 0.89600\}
\end{itemize}

\noindent We want to test the hypothesis that the PC-II samples come from U(0,1) distribution. We compute the test statistic and 5\% critical points for every schemes and the obtained results are provided in Table \ref{TN4}. From the table we observe that the proposed test accepts the null hypothesis for all the schemes which is accurate.

\begin{table}[h!]
	\centering
	\caption{Uniformity test resluts for PC-II censoring.}\label{TN4}
	\begin{tabular}{p{2cm} p{3cm} p{4cm} p{2cm}}
		\hline
		Schemes & Test Statistic & Critical Points & Decision\\[1ex]\hline
		1 & 0.07115 & (0.04477, 0.10707) & Accept\\
		2 & 0.01392 & (0.00810, 0.05040) & Accept\\
		3 & 0.02196 & (0.01771, 0.08343) & Accept\\
		4 & 0.09124 & (0.02833, 0.11140) & Accept\\\hline
		
	\end{tabular}
\end{table}

\section{Conclusion}\label{S7} In this paper, we study some new properties of WCRTE measure and develop four non-parametric estimators of WCRTE. We investigate asymptotic properties of the proposed estimators as well. We propose estimators for WCRE measure and study their asymptotic properties. Also, we suggested an estimator for WCRTE under progressively type-II censored samples. We compare the performances of these estimators using simulation and develop two new uniformity tests for complete samples. Power of these tests are compared with some popular existing tests and it is observed that proposed tests perform better compared to the other tests in majority of the cases. Also, we proposed an uniformity test under progressive type-II censoring. We investigated its performance by evaluating the power and we have observed that the uniformity test for progressive type-II censoring works quite well.\\

We use one estimator of WCRTE and WCRE to construct the uniformity test. Other estimators may also be used for this purpose. It will be interesting to see, for which value of the window size of these estimators gives the highest power. We evaluate the power of the uniformity test under progressive type-II censoring when $\alpha$ = 2. Computation became very intensive when it came to progressive type-II censoring. Note that, for fixed values of $n$ and $r$, there exists $n-1\choose r-1$ number of different censoring schemes. Therefore the number of progressive censoring schemes becomes very large even for moderate $n$ and $r$. It will be interesting to study the power of the test for different choices of $\alpha$ and for other censoring schemes. More work is needed in this direction.

\section*{Conflicts of Interest} The authors declare no conflict of interest.

\section*{Funding} No funding is received for this work.

\end{document}